\documentclass[journal,twoside,web]{ieeecolor}

\usepackage{generic}
\usepackage{cite}
\usepackage{graphicx}
\usepackage{amsmath} 
\usepackage{amssymb}  
\usepackage{lipsum}
\usepackage{amsfonts}
\usepackage{dblfloatfix}
\usepackage{mathrsfs}
\usepackage{mathtools}
\usepackage[thmmarks, amsmath]{ntheorem}
\usepackage[bbgreekl]{mathbbol}
\usepackage{multirow}
\usepackage[caption=false]{subfig}
\usepackage{makecell}

\usepackage{booktabs}
\usepackage{pdfcomment}
\hypersetup{hidelinks}

\usepackage{cuted}

\usepackage{algpseudocode}
\usepackage{algorithm}
\DeclareFontEncoding{LS1}{}{}
\DeclareFontSubstitution{LS1}{stix}{m}{n}
\DeclareSymbolFont{stixsymbols}       {LS1}{stixscr}  {m} {n}
\DeclareSymbolFontAlphabet{\mathscrl} {stixsymbols}

\algnewcommand{\algorithmicand}{\textbf{ and }}
\algnewcommand{\algorithmicor}{\textbf{ or }}
\algnewcommand{\OR}{\algorithmicor}
\algnewcommand{\AND}{\algorithmicand}
\algnewcommand{\TRUE}{\textbf{ true }}
\algnewcommand{\FALSE}{\textbf{ false }}
\algnewcommand{\NOT}{\textbf{ not }}
\algnewcommand{\BREAK}{\textbf{ break }}
\algrenewcommand\alglinenumber[1]{{\sffamily\footnotesize#1}}

\algnewcommand{\Initialize}[1]{%
	\State \textbf{Initialize:}
	\Statex \hspace*{\algorithmicindent}\parbox[t]{.8\linewidth}{\raggedright #1}
}

\DeclareSymbolFontAlphabet{\mathbb}{AMSb}
\DeclareSymbolFontAlphabet{\mathbbl}{bbold}

\DeclareMathOperator{\diag}{diag}
\DeclareMathOperator{\rank}{rank}
\DeclareMathOperator{\blkdiag}{blkdiag}

\DeclareMathOperator{\diff}{d}

\DeclareMathOperator{\Null}{null}

\newcommand{\R}{{\mathbb R}}
\newcommand{\N}{{\mathbb N}}
\newcommand{\mc}{\mathcal}
\newcommand{\ddt}{\tfrac{\diff}{\diff \!t}}
\newcommand{\ddtn}[1]{\tfrac{\diff^{#1}}{\diff \!t^{#1}}}

\newcommand{\dc}{{\text{dc}}}
\newcommand{\ac}{{\text{ac}}}
\newcommand{\g}{{\text{g}}}

\newcommand{\el}{{\text{l}}}
\newcommand{\oo}{{\text{o}}}

\newcommand{\cc}{{\text{c}}}

\newcommand{\T}{{\mathsf{T}}}

\newtheorem{theorem}{Theorem}
\newtheorem{lemma}{Lemma}
\newtheorem{proposition}{Proposition}
\newtheorem{assumption}{Assumption}

\newtheorem{definition}{Definition}
\newtheorem{condition}{Condition}
\newtheorem{example}{Example}
\newtheorem{corollary}{Corollary}

\newcommand{\acdc}{{\text{c}}}






\newcommand{\ad}{{{\text{c}_\text{ac}}}}
\newcommand{\da}{{{\text{c}_\text{dc}}}}

\title{\LARGE \bf  Power-balancing dual-port grid-forming power converter control for renewable integration and hybrid AC/DC power systems\thanks{This work was partially funded by the Swiss Federal Office of Energy under grant number SI/501707.}}

\author{Irina Suboti\'c and Dominic Gro\ss{} \thanks{I. Suboti\'c is with the Automatic Control Laboratory at ETH Z\"urich, Switzerland, D. Gro\ss{} is with the Department of Electrical and Computer Engineering at the University of Wisconsin-Madison, USA; e-mail:subotici@ethz.ch,  dominic.gross@wisc.edu}}

\begin{document}
\maketitle
\begin{abstract} 
In this work, we investigate grid-forming (GFM) control for dc/ac power converters in emerging power systems that contain ac and dc networks, renewable generation, and conventional generation. We propose a change in control paradigm to a universal dual-port GFM control strategy that simultaneously forms the converter ac and dc voltage (i.e., dual-port GFM), unifies standard functions of grid-following (GFL) and GFM (e.g., primary frequency control, maximum power point tracking) in a single universal controller, and is backwards compatible with conventional machine-based generation. Notably, in contrast to state-of-the-art control architectures that use a mix of grid-forming and grid-following control, dual-port GFM control can be used independently of the converter power source or network configuration. Our main contribution are stability conditions that cover emerging hybrid ac/dc networks as well as machines and converters with and without controlled power source, that only require partial knowledge of the network topology. Finally, a detailed case study is used to illustrate and validate the results.
\end{abstract}
\begin{IEEEkeywords}
Electric power networks, decentralized control, frequency stability, power converter control.
 \end{IEEEkeywords}
\section{Introduction}
A major transition in the operation of electric power systems is the replacement of conventional fuel-based power generation interfaced via synchronous machines by distributed renewable generation interfaced via dc/ac power converters. In contrast to machine-interfaced conventional generation that stabilize power systems through their physical properties (e.g., rotational inertia) and controls (e.g., speed governor), today's converter-interfaced renewables are controlled to maximize energy yield and can jeopardize system reliability \cite{WEB+15,MDH+18}.

Control strategies for grid-connected dc/ac voltage source converters (VSC) are typically categorized into (i) grid-forming (GFM) strategies that impose a stable ac voltage waveform (e.g., frequency and magnitude) at the point of connection, and (ii) grid-following (GFL) controls that require stabilization of the ac voltage waveform (e.g., frequency and magnitude) at their point of interconnection by other devices in the grid (e.g., machines).

While this classification commonly refers to the converter ac terminal (i.e., ac-GFM and ac-GFL), it is also useful to characterize the dc terminal, i.e., dc-GFM strategies control the dc terminal voltage and dc-GFL strategies require stabilization of the dc terminal voltage by other devices connected to the converter dc terminal.

Conceptually, standard ac-GFM/dc-GFL strategies impose an ac terminal voltage with variable frequency and magnitude that is adjusted in response to deviations of the ac side active and reactive power from their setpoints. The prevalent ac-GFM/dc-GFL strategies in the literature are droop-control \cite{CDA93}, synchronous machine emulation \cite{BIY14,DSF2015}, and (dispatchtable) virtual oscillator control \cite{JDH+14,GCB+19,SG+20}, that provide fast and reliable grid-support and are envisioned to replace synchronous machines as the cornerstone of future large-scale ac power systems \cite{MDH+18,LCP20}. Moreover, ac-GFM/dc-GFL strategies are used in high-voltage dc (HVDC) applications (e.g., to impose a stable ac voltage in offshore ac networks). In contrast, ac-GFL/dc-GFM strategies control the ac current to stabilize the dc voltage. The prevalent ac-GFL/dc-GFM strategies in the literature rely on a phase-locked loop (PLL) and are used to interface renewable generation with limited or no controllability (e.g., control the dc voltage of a solar photovoltaic (PV) system operating at its maximum power point (MPP) \cite{HS+17}) and HVDC (e.g., control the dc terminal voltage based on the deviation of the dc power from its setpoint \cite{GO+21}).

The two approaches are complementary in the sense that ac-GFM/dc-GFL requires a stable dc voltage and controls the ac voltage, while ac-GFL/dc-GFM require a stable ac voltage and control the dc voltage. While ac-GFL control cannot operate without sufficient ac-GFM resources \cite{MSV+19} and is vulnerable to grid disturbances \cite{NERC17}, ac-GFM/dc-GFL control fails if the converter dc voltage is not tightly controlled by a power source \cite{TGA+20}. Thus, at present, a mix of ac-GFL/dc-GFM and ac-GFM/dc-GFL control is needed to operate emerging power systems that contain renewable generation as well as a mix of high-voltage ac and HVDC transmission \cite{GO+21}. The resulting complex heterogeneous system dynamics pose significant challenges for system operation and stability analysis. 

The literature on stability analysis of ac power systems with power converters can be categorized into works using analytical \cite{GCB+19,SG+20,BHO2020,CGD17,MDPS+17,TAD2020} and numerical \cite{PPG2007,LD14,RMK2015,MSV+19,VHA+2018,TGA+20,LCP20} approaches. Most  of these works only consider ac-GFM converters with the dc terminal modeled as constant voltage source \cite{PPG07,BHO2020,LD14,VHA+2018,SG+20} and only few numerical works \cite{LCP20} consider renewable generation with limited controllability. Moreover, none of these works \cite{PPG07,BHO2020,LD14,VHA+2018,SG+20,LCP20} consider synchronous machines, synchronous condensers, GFL converters, or dc transmission. Some numerical studies consider  ac-GFM droop and PLL-based ac-GFL \cite{RMK2015} or ac-GFM droop, PLL-based ac-GFL, and synchronous machines \cite{MSV+19}, but model the dc terminal as constant voltage source. In contrast, \cite{TGA+20,CGD17,MDPS+17,LCP20,TAD2020} assumed that every converter is connected to a dc power source that implements proportional dc voltage control.
Overall, to the best of our knowledge, no analytical stability results are available in the literature that cover power systems containing both ac and dc networks,  synchronous machines and synchronous condensers, power generation with limited controllability, and power converters providing common ac-GFM (e.g., primary frequency control) and ac-GFL functions (e.g., maximum power point tracking).

Our contribution is two fold. First, we propose the concept of dual-port GFM control that subsumes the functions provided by standard ac-GFM and ac-GFL controls in a simple single \emph{universal control} strategy that significantly reduces system complexity. Second, we leverage the properties of dual-port GFM control to develop analytical stability conditions for linear reduced-order models of power systems with ac and dc transmission, renewable generation either at a curtailed operating point or at its MPP, and conventional generation.

Specifically, dual-port GFM control imposes the ac frequency through active power droop \cite{CDA93} and dc voltage droop \cite{CBB+2015,MDPS+17,AJD18} while ensuring power balancing between the ac and dc terminal by mapping the signals indicating power imbalance (i.e., the ac frequency and dc voltage deviation) between the converter ac and dc terminals (see Fig.~\ref{fig:imbalance}).

This \emph{universal control} (i) can be applied to all  aforementioned technologies, (ii) significantly reduces the complexity of the overall system dynamics, and (iii) enables analytical stability conditions for power systems containing a wide range of legacy technologies (e.g., synchronous machines and synchronous condensers) and emerging technologies (e.g., PV, wind power, and HVDC).
\begin{figure}[h]
	\centering
	\includegraphics[width=0.43\textwidth]{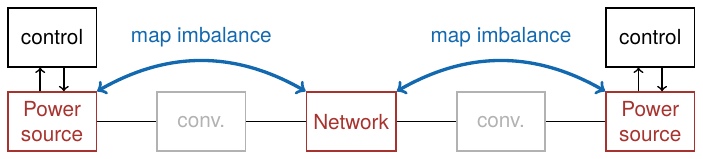}
	\caption{Dual-port grid forming control propagates power imbalances.
		\label{fig:imbalance}}
\end{figure}

First, in Sec.~\ref{sec:power.system.model}, we develop a graph representation of complex power systems with ac and dc networks and present a tractable reduced-order linearized models of converters, machines, and power sources that, in abstraction, model devices ranging from machines with turbine governor system to synchronous condensers, solar PV systems, and HVDC converters. Based on this model, we propose a novel unified power-balancing dual-port GFM controller that simultaneously imposes the converter ac voltage and controls its dc voltage while ensuring power balancing between the ac and dc terminals. We also illustrate that, in this general setup, stability conditions that are independent of the network topology can no longer be obtained.

Next, in Sec.~\ref{sec:stability}, we develop our main theoretical contribution and provide conditions for ac frequency / dc voltage stability of hybrid ac/dc power systems that account for devices (e.g., synchronous condensers, HVDC converters, renewables operating approximately at MPPT) that are not directly connected to a stabilizing power source (e.g., turbine-governor system or dc power source providing dc voltage control) and only require partial knowledge of the system topology.  Finally, in Sec.~\ref{sec:test.case.study} electromagnetic transient (EMT) simulations of a detailed case study are used to illustrate and validate the results, and Sec.~\ref{sec:conclusion} concludes the paper.

\subsection*{Notation}
We use $\R$ and $\mathbb N$ to denote the set of real and natural numbers and define $\R_{\geq a}\coloneqq \{x \in \mathbb R \vert x \geq a\}$ and, e.g., $\R_{[a,b)}\coloneqq \{x \in \mathbb R \vert a \leq x < b\}$. Given a matrix $A$, $A^\mathsf{T}$ denotes its transpose. We write $A\succcurlyeq0$ ($A\succ0$) to denote that $A$ is symmetric and positive semidefinite (definite). For column vectors $x\in\R^n$ and $y\in\R^m$ we use $(x,y) = [x^\mathsf{T}, y^\mathsf{T}]^\mathsf{T} \in \R^{n+m}$ to denote a stacked vector.
Furthermore, $I_n$ denotes the identity matrix of dimension $n$, matrices of zeros of dimension $n \times m$ are denoted by $\mathbbl{0}_{n\times m}$, and $\mathbbl{0}_{n}$ and $\mathbbl{1}_{n}$ denote column vector of zeros and ones of length $n$. If a matrix $M$ does not contain a row or column, we call it an empty matrix.  The cardinality of a set $\mc X \subset \mathbb{N}$ is denoted by $|\mc X|$.
 	
\section{Power system model} \label{sec:power.system.model}
In this section, we introduce a tractable reduced-order order model of a power system containing ac and dc transmission, power converters, and machines. Based on this model, we propose a novel unified dual-port GFM controller. 
\subsection{Hybrid DC/AC power network topology} \label{sec:power.network.topology}
The network is modeled as a connected, undirected, simple graph $\mc G_N=(\mc N_N, \mc E_N)$, where $\mc N_N \coloneqq \mc N_\ac \cup \mc N_\dc \cup \mc N_\cc$  consists of ac nodes $\mc N_{\ac}$ corresponding to machines, dc/ac nodes $\mc N_\cc$ corresponding to power converters, and dc buses $\mc N_\dc$. We distinguish two types of edges: ac edges $\mc E_\ac \in (\mc N_\ac \cup \mc N_\cc) \times (\mc N_\ac \cup \mc N_\cc)$ corresponding to ac connections, and dc edges $\mc E_\dc \in \mc (\mc N_\dc \cup \mc N_\cc) \times (\mc N_\dc \cup \mc N_\cc)$ corresponding to dc connections. Fig.~\ref{fig:gt} shows an example of a hybrid ac/dc network. Note that the ac and dc edges do not necessarily correspond to transmission lines, but generic ac and dc connections between converters and machines (cf. Fig.~\ref{fig:b2bwttop}). 

Next, we partition the network into an ac network $\mc G_\ac=(\mc N_\ac \cup \mc N_\cc,\mc E_\ac)$ (red in Fig.~\ref{fig:gt}), and a dc network $\mc G_\dc=(\mc N_\dc \cup \mc N_\cc,\mc E_\dc)$ (black in Fig.~\ref{fig:gt}). Even though the overall hybrid network $\mc G_N$ corresponds to a connected graph,  the ac and dc graphs $\mc G_\ac$ and $\mc G_\dc$ are not necessarily connected. Thus, we partition $\mc G_\ac$ into connected components $\mc G_\ac= \bigcup_{i=1}^{N_\ac} \mc G_\ac^i$  corresponding to $N_\ac$ subgrids, i.e., for all $i \in \N_{[1,N_\ac]}$, $\mc G_\ac^i=(\mc N_\ac^i \cup \mc N_{\ad}^i, \mc E_\ac^i)$ where $\mc E_\ac^i$ is the edge set and $\mc N^i_\ac$ and $\mc N_\ad^i$ denote the ac nodes and dc/ac nodes (i.e., the converter nodes from $\mc N_c$ that are part of the $i^{\text{th}}$ ac graph). Analogously, $\mc G_\dc= \bigcup_{i=1}^{N_\dc} \mc G_\dc^i$, where for $i \in \N_{[1,N_\dc]}$, $\mc G_\dc^i=(\mc N_\dc^i \cup \mc N_{\da}^i, \mc E_\dc^i)$. Finally, we note that dc/ac voltage source converters interfacing ac and dc subgrids are part of both their corresponding ac and dc graphs. 
	
\subsection{Network power flow}\label{subsec:networkmodel}
We use a linear model for the power flow and all variables denote deviations from their linearization point. To every ac node $l \in \mc N_\ac$ we associate a voltage phase angle deviation $\theta_l \in \mathbb{R}$ and a frequency deviation $\omega_l \in \mathbb{R}$; to every dc node $l \in \mc N_\dc$ we associate a dc voltage deviation $v_l \in \mathbb{R}$, and to every dc/ac node $l \in \mc N_\ad$ we associate a voltage phase angle deviation $\theta_l \in \mathbb{R}$ and a dc voltage deviation $v_l \in \mathbb{R}$. To every ac edge $l \in \mc E_\ac$ we assign an active power deviation $P_{\ac,l} \in \mathbb{R}$, and to every dc edge $l \in \mc E_\dc$ we assign a power deviation $P_{\dc,l} \in \mathbb{R}$. For small phase angle and dc voltage deviations as well as constant ac voltage magnitudes we obtain the linearized power flow model \cite[Sec. 6]{PWS-MAP:98}
\begin{align}\label{eq:network.model}
P_{\text{ac}} = L_{\text{ac}} \theta + P_{\text{d}_\ac}, \quad
P_{\text{dc}} = L_{\text{dc}} v+ P_{\text{d}_\dc},
\end{align} 
where $L_{\text{ac}}$ is the Laplacian matrix of the graph $\mc G_\ac$ with ac edge susceptances as edge weights, $L_{\text{dc}}$ is the Laplacian matrix of the graph $\mc G_\dc$ with dc edge conductances as edge weights, and the vectors $\theta \in \mathbb{R}^{|\mc N_\ac| + |\mc N_\text{c}|}$ and $v \in \mathbb{R}^{|\mc N_\dc| +|\mc N_\text{c}|}$ collect the ac voltage phase angles and dc voltages of the different nodes. Finally, $P_{\text{d}_\ac}\in \mathbb{R}^{|\mc N_\ac|+|\mc N_\text{c}|}$ and $P_{\text{d}_\dc}\in \mathbb{R}^{|\mc N_\dc|+|\mc N_\text{c}|}$ model variations in load at the ac and dc nodes.
	
\subsection{Device models}
In the following, we present reduced-order device models that will be used to obtain the overall power system model in Sec. \ref{subsec:power.system.model}. Before proceeding, we note that machines and converters have losses that, in theory, render the power system stable. However, in practice, they are often too small (e.g., flywheel friction losses or HVDC converter losses) to rely on them for stability. Therefore, we assume that coefficients $D_l$ and $G_l$ in the following equations that model device losses are only used to model significant damping (e.g., frequency depended loads) and zero for negligible parasitic losses.
\subsubsection{Synchronous machines}
For all $l \in \mc N_\ac$, we use the second order machine dynamics \cite[Section 5]{PWS-MAP:98}
\begin{subequations}\label{eq:machine.model}
\begin{align}
\ddt \theta_l &= \omega_l,\\
M_l \ddt \omega_l &= -D_l \omega_l + P_l - P_{\ac,l} \label{eq:machine.model.omega.}. 
\end{align}	  
\end{subequations}
Here, $M_l \in \R_{>0}$ and $D_l \in \R_{\geq 0}$ model the machine inertia and losses. Moreover, $P_{\ac,l}$ is the ac active power deviation, and $P_l \in \R$ is the deviation of the mechanical power applied to the machine rotor. 
If the machine is not interfacing generation (e.g., synchronous condenser or flywheel), then $P_l=0$. Otherwise, we use the turbine model 
\begin{align} \label{eq:turbine.model}
T_{\g,l} \ddt P_l = -P_l - k_{\g,l} \omega_l,
\end{align}
where $k_{\g,l} \in \R_{\geq 0}$ is the linearized sensitivity of the turbine with respect to changes in frequency (e.g., governor gain of a steam turbine) and $T_{\g,l} \in \R_{>0}$ is its time constant. Conceptually, \eqref{eq:turbine.model} can also be used to model a wind turbine. Fig.~\ref{fig:wteff} shows the power generated by a wind turbine with zero blade pitch angle as a function of the rotor speed $\omega$ and wind speed \cite{ABL+2012}. Linearizing at the MPP (circle) results in $k_{\g,l}=0$. Linearizing at a higher turbine speed (triangle) results in $k_{\g,l} \in \R_{>0}$, and $T_{\g,l} \in \R_{>0}$ is an aerodynamic time constant \cite{KB2013}. A more detailed investigation of the complex dynamics of wind turbines utilizing blade pitch control is seen as an interesting area for future work.
\begin{figure}[t!]
\centering
\includegraphics[width=0.7\columnwidth]{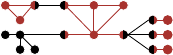} 
\caption{Example of a hybrid ac/dc network topology with ac nodes and lines (red), dc nodes and lines (black), and dc/ac nodes (red/black). \label{fig:gt}}
\end{figure}
\begin{figure}[t!]
\centering
\includegraphics[width=1\columnwidth]{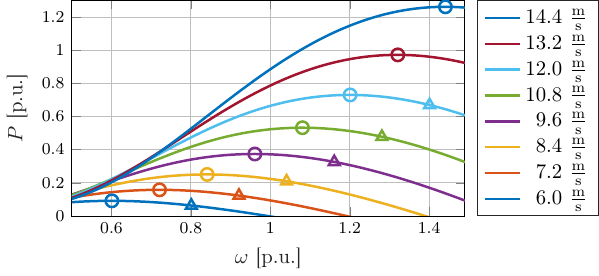}
\vspace{-1.5em}
\caption{Steady-state power generation $P$ of a wind turbine with zero blade pitch angle as a function of the rotor speed $\omega$ and wind speed, MPP (circle), and operating points with power reserves (triangle). \label{fig:wteff}}
\end{figure}
\subsubsection{DC nodes and dc sources}
For all $l \in \mc N_\dc$, we use the following dc bus dynamics
\begin{align} \label{eq:dc.model}
C_l \ddt v_l &= -G_l v_l + P_l - P_{\dc,l} , 
\end{align}
where $C_l= c_{l,\text{dc}} v^\star_l \in \R_{>0}$, $c_{l,\text{dc}} \in \R_{>0}$ is the dc capacitance, and $v^\star_l \in \R_{>0}$ is the nominal dc voltage. Moreover, $G_l = g_{l,\text{dc}} v^\star_l \in \R_{\geq 0}$ where $g_{l,\text{dc}} \in \R_{\geq 0}$ is the dc conductance, $P_{\dc,l}$ is the deviation of the dc network power injection. If the dc bus is not interfacing generation then $P_l=0$. Otherwise, we model $P_l$ by
\begin{align} \label{eq:power.source}
T_{\g,l} \ddt P_l = -P_l - k_{\g,l} v_l,
\end{align}	
where $T_{\g,l}\in\R_{>0}$ is the dc source time constant and $k_{\g,l}\in\R_{\geq 0}$ is its sensitivity with respect to the dc voltage. For example, linearizing the power generation of a PV module at the MPP results in $k_{\g,l}=0$, while linearizing at an operating point with power reserves results in $k_{\g,l}\in\R_{>0}$ (see Fig.~\ref{fig:pvcharacteristic}).
	
\subsubsection{DC/AC voltage source converters}
Each dc/ac voltage source converter (VSC) with index $l \!\in\! \mc N_\cc$ modulates a dc voltage $v_l$ into an ac voltage. The angle and magnitude of the ac voltage are control inputs. For frequency stability analysis of transmission systems, the ac voltage magnitude is typically assumed to be constant \cite[Sec. 6]{PWS-MAP:98}. The dc-link capacitor dynamics are modeled by \cite{CGD17}
\begin{align} \label{eq:converter.model}
C_l \ddt v_l &= -G_l v_l + P_l - P_{\ac,l} - P_{\dc,l} , 
\end{align}
where $C_l=c_{l,\text{dc}}v^\star_l  \in \R_{>0}$ and $G_l=g_{l,\text{dc}}v^\star_l  \in \R_{\geq 0}$ are the (scaled) dc capacitance and conductance, and $P_l$, $P_{\dc,l}$, $P_{\ac,l}$, denote the dc source power and dc and ac network power injections. Note that $P_{\ac,l}$ is a function of the angle deviation $\theta_l$ (cf. Sec.~\ref{subsec:networkmodel}).  For power converters that do not interface power generation $P_l=0$, e.g., static synchronous compensators (STATCOM) used for reactive power control or HVDC converters \cite{GO+21}. Otherwise, the power generation $P_l$ (e.g., PV modules) is modeled by \eqref{eq:power.source}.
\subsubsection{Modeling complex devices through model composition}
A wide range of complex devices and topologies can be modeled through composition of the models developed in this section. For example, Fig.~\ref{fig:b2bwttop} shows a wind turbine interfaced by a synchronous machine and back-to-back power converters. Moreover, an offshore wind farm can be modeled by connecting multiple wind turbines to an ac subgrid that is connected to an onshore ac subgrid through a dc network.
	
\begin{figure}[h!!]
\centering
\includegraphics[width=0.4\textwidth]{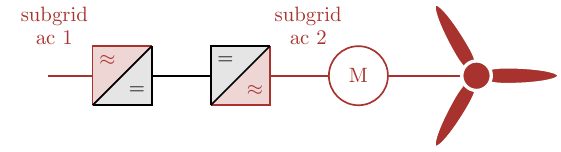}
\caption{Wind turbine interfaced by a synchronous machine and back-to-back power converters. The connection between the machine and the rotor side converter is modeled through an ac edge and the connection between the power converters is modeled through a dc edge. \label{fig:b2bwttop}}
\end{figure}

\section{Power-balancing dual-port GFM control}
The ac voltage angle dynamics of each dc/ac converter, are prescribed by the power-balancing dual-port GFM control \footnote{Typically, GFM control provides a voltage reference to cascaded inner controls. For system-level stability analysis the inner controls are commonly assumed to perfectly track the voltage reference and are neglected \cite{SG+20}.}
\begin{align} \label{eq:control.law}
\ddt \theta_l = -m_{p,l} P_{\ac,l} + k_{\theta,l} v_l,
\end{align}
with $P_\ac\!-\!f$ droop coefficient $m_{p,l} \!\in\! \R_{>0}$, $v_\dc\!-\!f$ droop coefficient $k_{\theta,l} \!\in\! \R_{>0}$, and active power and dc voltage deviations $P_{\ac,l}$ and $v_l$. 

Broadly speaking, the $P_\ac\!-\!f$ droop term contributes to angle/frequency synchronization, while the $v_\dc\!-\!f$ droop term stabilizes the dc voltage through the converter ac side. If the converter dc voltage is tightly controlled by a dc source (i.e., $v_l \approx 0$), \eqref{eq:control.law} resembles standard ac-GFM $P_\ac\!-\!f$ droop control \cite{CDA93}. In contrast, if the dc voltage is not controlled by a dc source (e.g., when interfacing renewables operating at the MPP) or there is no dc source (e.g., in a STATCOM), the $v_\dc\!-\!f$ droop term dominates the response and \eqref{eq:control.law} resembles ac-GFL control that adjusts the ac power to stabilize the dc voltage. More generally, between these two extreme cases, \eqref{eq:control.law} translates the signals indicating power imbalance in ac networks (i.e., ac frequency and ac power imbalance) and dc networks (i.e., dc voltage deviation) to each other. In other words, power imbalances in any ac or dc network propagate to the power sources in all ac and dc network (see Fig.~\ref{fig:imbalance}) resulting in a stabilizing response by power sources with $k_{\g,i}>0$. In contrast, using standard ac-GFM control (e.g., \cite{CDA93,DSF2015,SG+20}), the dc voltage is not controlled through the converter ac side and needs to be stabilized through the converter dc side.

Next, we define the \emph{effective} droop gains $\kappa_{p,l}\coloneqq m_{p,l}+k_{\theta,l}/(k_{\g,l} + G_l)$ and $\kappa_{v,l}\coloneqq m_{p,l}(k_{\g,l}+G_l)+k_{\theta,l}$. For converter-interfaced generation, the steady-state relationship between frequency, active power, and dc voltage is given by
\begin{align}
 \omega_l = -\kappa_{p,l} P_{\ac,l},\quad 
 \omega_l =  \kappa_{v,l} v_l.
\end{align}
System operators typically determine $\kappa_{p,l}$, and the converter design determines a lower bound on $\kappa_{v,l}$ that ensures a limited dc voltage deviation for a given maximum frequency deviation. In other words, like standard $P_\ac\!-\!f$ droop control, the gains are fully determined by steady-state specifications. For a converter with $k_\g=0$ (e.g., interfacing HVDC), $\kappa_{v,l}=k_{\theta,l}$ and $m_{p,l}$, can be tuned independently of device steady-state specifications. Finally, for a machine with $k_\g>0$, $\kappa_{p,l}=1/k_\g$.

\section{Overall power system model}\label{subsec:power.system.model}
The overall power system model combines the ac and dc transmission network model \eqref{eq:network.model}, synchronous machine model \eqref{eq:machine.model}, power converter model \eqref{eq:converter.model} with power-balancing GFM control \eqref{eq:control.law}, and the power source models \eqref{eq:turbine.model} and \eqref{eq:power.source}.
	
We define a vector $\theta  \in \R^{|\mc N_\ac \cup \mc N_\cc|}$ that collects the angles of all ac nodes and converter nodes, a vector $\omega \in \R^{|\mc N_\ac|}$ that collects the frequencies of ac nodes, and a vector $v \in \R^{|\mc N_\cc \cup \mc N_\dc|}$ that collects the dc voltages of all dc/ac converter nodes and dc nodes. Next, we define the set $\mc N_\g \subseteq \mc N$ of power sources (i.e., a turbine or a dc power source) that responds to frequency or dc voltage deviations (i.e., $k_{\g,i}>0$ if $i \in \mc N_\g$) and $P \in \R^{|\mc N_\g|}$ collects their power generation. Additionally, the matrix $\mc I_{\g,\ac} \in \{0,1\}^{|\mc N_\ac| \times |\mc N_\g|}$ models the interconnection of machines and stabilizing mechanical power sources and
\begin{table*}[b!!]
\small
\begin{align} \label{eq:full.dynamics.change.of.coordiantes}
\!\!\!T \ddt \!\!\begin{bmatrix}
\eta \\ \omega \\ v\\ P \\ \bar{P}
\end{bmatrix} \!\!=\!\! \begin{bmatrix}
-(\mc I_{\ad} B_\ac)^\T \!M_p \mc I_{\ad} B_\ac \mc W_\ac  &  (\mc I_{\ac} B_\ac)^\T  & (\mc I_{\ad} B_\ac)^\T K_\theta  \mc I_{\da} & \mathbbl{0}_{(|\mc N_\ac \cup \mc N_\cc|)\times |\mc N_\g|} & \mathbbl{0}_{(|\mc N_\ac \cup \mc N_\cc|)\times |\bar{\mc N}_\g|} \\
- \mc I_{\ac} B_\ac \mc W_\ac & -D & \mathbbl{0}_{|\mc N_\ac| \times |\mc N_\ac|} & \mc I_{\g,\ac} & \bar{\mc I}_{\g,\ac} \\
- \mc I_{\da}^\T \mc I_{\ad} B_\ac \mc W_\ac  & \mathbbl{0}_{|\mc  N_\cc \cup \mc N_\ac|\times |\mc N_\ac|}  & -(G+L_\dc) & \mc I_{\g,\dc} & \bar{\mc I}_{\g,\dc} \\
\mathbbl{0}_{|\mc N_\g| \times |\mc E_\ac|} & -K_\g \mc I_{\g,\ac}^\T & -K_\g \mc I_{\g,\dc}^\T  & -I_{|\mc N_\g|} & \mathbbl{0}_{|\mc N_\g| \times |\bar{\mc N}_\g|} \\
\mathbbl{0}_{|\bar{\mc N}_\g| \times |\mc E_\ac|} & \mathbbl{0}_{|\bar{\mc N}_\g| \times |\mc N_\ac|}  & \mathbbl{0}_{|\bar{\mc N}_\g| \times |\mc N_c|} & \mathbbl{0}_{|\bar{\mc N}_\g| \times |\mc N_\g|} & -I_{|\bar{\mc N}_\g|} 
\end{bmatrix} \!\!\! \begin{bmatrix}
\eta \\ \omega \\ v\\ P \\ \bar{P}
\end{bmatrix}\!\!
\end{align}
\end{table*}
\begin{align*}
\{\mc I_{\g,\ac}\}_{(i,j)}=\left \{ \begin{array}{cc} 1, & i \in \mc N_\ac, \ j \in  \mc N_\g, \\
0, & i \in \mc N_\ac, \ j \notin  \mc N_\g \end{array} \right \}, 
\end{align*}
i.e., a machine with index $i$ is connected to a turbine $j$ with $k_{\g,j}>0$ iff $\{\mc I_{\g,\ac}\}_{(i,j)}=1$. Similarly, $\mc I_{\g,\dc} \in \R^{|\mc N_\cc \cup \mc N_\dc| \times |\mc N_\g|}$ describes which dc/ac and dc nodes are connected to a stabilizing dc power source, i.e., 
\begin{align*}
\{\mc I_{\g,\dc}\}_{(i,j)}=\left \{ \begin{array}{cc} 1, & i \in \mc N_\cc \cup \mc N_\dc, \ j \in  \mc N_\g, \\
0, & i \in \mc N_\cc \cup \mc N_\dc, \ j \notin  \mc N_\g \end{array} \right \}.
\end{align*}
The set of power sources with dynamics \eqref{eq:power.source} and \eqref{eq:turbine.model} that do not respond to frequency or dc voltage deviations is denoted by $\bar{\mc  N}_\g \subseteq \mc N$ (i.e., $k_{\g,i}=0$ if $i \in \bar{\mc N}_\g$) and $\bar{P}\in \R^{|\bar{\mc N}_\g|}$ denotes their power generation. 
Analogously to $\mc I_{\g,\ac}$ and $\mc I_{\g,\dc}$,
the matrices $\bar{\mc I}_{\g,\ac}\in \{0,1\}^{|\mc N_\ac| \times | \bar{\mc N}_\g|}$ and $\bar{\mc I}_{\g,\dc}\in \{0,1\}^{|\mc N_c| \times | \bar{\mc N}_\g|}$ model the interconnection between machines, converters, power sources in $\bar{\mc  N}_\g$. For notational convenience, we define matrices $\mc I_\ac \in \{0,1\}^{|\mc N_\ac| \times |\mc N_\ac \cup \mc N_\cc|}$ and $\mc I_\ad \in \{0,1\}^{|\mc N_\cc| \times |\mc N_\cc \cup \mc N_\ac|}$ to extract machine and converter angles from the overall angle vector $\theta$, e.g., $\mc I_\ac \theta$ is the vector of all machine angles. Similarly, $\mc I_\da \!\in\! \{0,1\}^{|\mc N_\cc| \times |\mc N_\cc \cup \mc N_\dc|}$, and $\mc I_\dc \!\in\! \{0,1\}^{|\mc N_\dc| \times |\mc N_\cc \cup \mc N_\dc|}$ extract converter dc voltages and dc node voltages from the vector $v$. 

To facilitate the stability analysis, we change coordinates from absolute angles $\theta$ to angle differences i.e.,  $\eta \coloneqq B_\ac^\T \theta$ \cite[cf. Sec. III]{MDPS+17}, where $B_\ac \in \{ -1,0,1 \}^{|\mc N_\ac \cup \mc N_\ad| \times |\mc E_\ac|}$ is the incidence matrix of the ac graph $\mc G_\ac$. 

Finally, the overall model of a hybrid power system is given by \eqref{eq:full.dynamics.change.of.coordiantes} with $T\coloneqq \blkdiag \{ I_{|\mc N_\ac \cup \mc N_\cc|}, M, C, T_\g,\bar{T}_\g \}$ and machine inertia $M\coloneqq \diag \{ M_i \}_{i=1}^{|\mc N_\ac|} \succ 0$, dc capacitance $C=\diag \{ C_{i} \}_{i=1}^{|\mc N_\cc \cup \mc N_\dc|} \succ 0$, and power generation time constants $T_\g \coloneqq \diag\{T_{\g,i}\} \succ 0$  and $\bar{T}_\g \coloneqq \diag\{\bar{T}_{\g,i}\} \succ 0$. Moreover, $\mc W_\ac$ is a diagonal matrix of ac edge weights, and $L_\dc$ is the dc graph Laplacian. Finally, $M_p\coloneqq \diag \{m_{p,i}\}_{i=1}^{|\mc N_\cc|} \!\succ\! 0$ and $K_\theta \coloneqq \diag \{k_{\theta,i}\}_{i=1}^{|\mc N_\cc|}  \succ 0$ collect the converter control gains, $D \coloneqq \diag \{D_i\}_{i=1}^{|\mc N_{\ac}|} \succeq 0$ and $G= \diag\{ G_{i}\}_{i=1}^{|\mc N_\ad \cup \mc N_\dc|} \!\succeq \! 0$ collect machine and converter losses, and $K_\g=\diag\{k_{\g,i}\}_{i=1}^{|\mc N_\g|} \succ 0$ collects the power source sensitivities. 

\section{Stability analysis}\label{sec:stability}
Typically, conditions for frequency stability of multi-converter/multi-machine ac power systems exploit passivity, and do not consider the converter dc side. However, in our setting the dc source and network dynamics play a crucial role and individual devices may not be passive. The framework in \cite{PM2019} does not rely on passivity of the nodes/devices, but aims to establish asymptotic stability and robustness guarantees for multi-converter/multi-machine ac networks with arbitrary connected topologies. While this framework is very general, it is not readily applicable to our setting because we consider multiple disjoint ac and dc networks that are interconnected through power converters. Moreover, the following example demonstrates that, even when restricting the focus to a single multi-machine ac network (i.e., synchronous generators and synchronous condensers), asymptotic stability can, in general, not be guaranteed using only device parameters. 	
\begin{example}{\bf(Stability and network parameters)}  \label{example:three.machines}
We consider an ac network that consists of two machines without damping and one machine with damping. The dynamics and network topology are given by
\begin{align*}
\ddt \begin{bmatrix}
	\eta \\ M \omega
\end{bmatrix} =
\begin{bmatrix}  
\mathbbl{0}_{2 \times 2} & B_\ac^\T  \\
- B_\ac \mc W_\ac & -D
\end{bmatrix} \begin{bmatrix}
\eta \\ \omega
\end{bmatrix},\; B_\ac \coloneqq \begin{bmatrix} 1 & 1 \\ -1 &0 \\ 0 &-1 \end{bmatrix},
\end{align*}
with line susceptances $\mc W_\ac =\diag \{b_1,b_2\}$ and damping $D=\diag\{d_1,0,0\} \succeq 0$. For $b_2/b_1=m_3/m_2$, the solution of the dynamics starting from the initial condition $\eta(0)=c(\tfrac{b_2}{b_1},0)$ and $\omega(0)=\mathbbl{0}_3$ is given by $\zeta=\sqrt{b_1/m_2}$ and $\eta_1(t)=\tfrac{b_2}{b_1} \cos(\zeta t)$, $\eta_2(t)=-\cos(\zeta t)$, $\omega_1(t)=0$, $\omega_2(t)=\frac{b_2}{\sqrt{b_1  m_2}} \sin(\zeta t)$, and $\omega_3(t)=-\zeta\sin(\zeta t)$. Thus, the solution does not converge and the system is not asymptotically stable. 
\end{example}
In this example, for every choice of machine parameters (i.e., $m_1$, $m_2$, $m_3$, and $d_1$), there exists network parameters (i.e., $b_1$ and $b_2$) such that the multi-machine ac system is not asymptotically stable. In other words, in our setting, stability conditions that only require connectedness of the network can, in general, not be obtained.

Motivated by this observation, Sec.~\ref{subsec:scalable} develops stability conditions that can be verified for each ac subgrid and only require partial knowledge of the graph of each ac system, i.e., do not require knowledge of control gains and line susceptances. Sec.~\ref{subsec:mainresult} provides the stability result and Sec.~\ref{subsec:illustrative.examples} provides examples that illustrate the stability conditions.

\subsection{$N-\mu$ stability conditions}\label{subsec:scalable}

To begin with, we require the following condition that allows to guarantee the stability within each dc network.
\begin{condition} {\bf(Consistent $\boldsymbol{v}_\dc\!-\!\boldsymbol{f}$ droop)}\label{cond:identical.k.theta.gains} For all $i \!\in\! \N_{[1,N_\dc]} $ and all $(n,l) \!\in\! \mc N^i_{\da}\! \times \! \mc N^i_{\da}$ it holds that $k_{\theta,n}\!=\! k_{\theta,l} \coloneqq k_\theta^i$.
\end{condition}
This condition requires the per unit $v_\dc\!-\!f$  droop gains of devices connected to the same dc subgrid to be equal and ensures a consistent mapping of frequency deviations and dc voltage deviations (i.e., $\omega_l/v_l = k_\theta^i$ for all $l \in N^i_{\da}$) between ac and dc subgrids at the nominal power flow (i.e., if $P_{\ac,l}=0$). This condition is important to ensure frequency and dc voltage coherency, i.e., that individual converters do not deviate too much from the average frequency (dc voltage) of an ac subgrid (dc subgrid), and to avoid using control gains that induce excessive power flows due to incoherent frequencies (dc voltages) at different nodes of an ac subgrid (dc subgrid). While a detailed analysis (i.e., including measurement noise) is beyond the scope of this work, the following example illustrates the need for consistent $v_\dc\!-\!f$  droop gains.
\begin{example}{\bf(Circulating power flow)} Consider two lossless dc/ac converters (i.e., $G_l=0$) connected to the same ac and dc subgrid. Moreover, the system is in steady state and no other devices are connected to the dc subgrid. Thus, $P_{\dc,1}=g_\dc(v_1-v_2)$ and $P_{\ac,1}\!=\!-P_{\dc,1}\!=\!P_{\dc,2}\!=\!-P_{\ac,2}$. Using $\omega_s$ to denote the synchronous steady-state frequency,  \eqref{eq:control.law} becomes $v_l=k_{\theta,l}^{-1}(\omega_s+m_{p,l}P_{\ac,l})$ for $l\in\{1,2\}$ and we obtain $P_{\dc,1}=g_\dc(k_{\theta,1}^{-1}(\omega_s+m_{p,1}P_{\ac,1}) - k_{\theta,2}^{-1}(\omega_s+m_{p,2}P_{\ac,2}))$. Solving for $P_{\dc,1}$ results in $P_{\dc,1} = (1+\frac{g_\dc m_{p,1}}{k_{\theta,1}}-\frac{g_\dc m_{p,2}}{k_{\theta,2}})^{-1} (\frac{g_\dc}{k_{\theta,1}}-\frac{g_\dc}{k_{\theta,2}}) \omega_s$, i.e., $P_{\dc,1}=0$ for $\omega_s \neq 0$ if and only if $k_{\theta,1}=k_{\theta,2}$. In contrast, if $k_{\theta,1}\neq k_{\theta,2}$ and $\omega_s \neq 0$ power will circulate through from the ac subgrid to the dc subgrid, and back to the ac subgrid.
\end{example}
Given a security level $(\mu_{\mc N},\mu_{\mc E}) \in \mathbb{N}_0 \times \mathbb{N}_0$ (e.g., specified by a system operator), we develop conditions to verify stability for all systems obtained by deleting up to $\mu_{\mc N}$ nodes and $\mu_{\mc E}$ edges from \eqref{eq:full.dynamics.change.of.coordiantes}. For brevity of the presentation we make the following Assumption that rules out a \emph{system split} for the overall system and each ac subsystem when deleting $\mu_{\mc N}$ nodes and $\mu_{\mc E}$ edges.
\begin{assumption}{\bf{($\boldsymbol{N\!-\!\mu}$ connectivity)}}\label{assum:N-mu}
Consider the security level $(\mu_{\mc N},\mu_{\mc E})$. It holds that $\mc G_N$ and its connected ac components $\mc G^i_\ac$, $i \in \N_{[1,N_\ac]}$, are connected when deleting any $\mu_{\mc N}$ nodes (and their associated edges) and any $\mu_{\mc E}$ edges.
\end{assumption}
Guaranteeing stability after deleting up to $\mu_{\mc N}$ nodes requires the initial number of devices with stabilizing response (i.e., $k_{g,l}>0$, $G_l>0$, or $D_l>0$) to be larger than $\mu_{\mc N}$.
This requirement is formalized in the next Assumption. To this end, for all ac subgrids $i\in \N_{[1,N_\ac]}$, we define node sets that collect nodes $\mc N_{\ac^\el}^i$ and $\mc N_{\ad^\el}^i$ with significant losses (i.e., $D_l>0$ and $G_l>0$), nodes $\mc N_{\ac^\g}^i$ and $\mc N_{\ad^\g}^i$ that are connected to power sources with $k_{\g,l}>0$, and the remaining nodes $\mc N_{\ac^\oo}^i=\mc N_\ac^i\setminus (\mc N_{\ac^\el}^i \cup \mc N_{\ac^\g})$, $\mc N_{\ad^\oo}^i=\mc N_\ad^i\setminus (\mc N_{\ad^\el}^i \cup \mc N_{\ad^\g})$. Similarly, for each dc subgrid $i \in \N_{[1,N_\dc]}$, we define the node sets $\mc N_{\dc^\el}^i$, $\mc N_{\da^\el}^i$, $\mc N_{\dc^\g}^i$, $\mc N_{\da^\g}^i$, $\mc N_{\dc^\oo}^i$, and $\mc N_{\da^\oo}^i$. 
\begin{assumption}{\bf{(Frequency \& dc voltage stabilization)}}\label{assump:onestab} 
One of the following holds for $\mc G_N$:
\begin{enumerate}
\item $\exists i \in \N_{[1,N_\ac]}:| \mc N^i_{\ac^\g} \cup \mc N^i_{\ac^\el} \cup \mc N^i_{\ad^\el} \cup \mc N^i_{\ad^\g}| > \mu_{\mc N}$, \label{assump:onestab.ac}
\item $\exists i \in \N_{[1,N_\dc]}: |\mc N^i_{\da^\g} \cup \mc N^i_{\da^\el} \cup \mc N^i_{\dc^\el} \cup \mc N^i_{\dc^\g}| > \mu_{\mc N}$. \label{assump:onestab.dc}
\end{enumerate}
\end{assumption}
As illustrated in Example~\ref{example:three.machines}, in general one cannot expect to find conditions that ensure stability for all connected network topologies. To formalize the stability conditions on the network topology and clarify the roles of different devices, the nodes in each ac subgrid are partitioned into different groups depending on whether an ac subgrid is machine-dominated or converter-dominated.
\begin{definition}{\bf(Partitioning of $\boldsymbol{\mc N}^{\boldsymbol{i}}_\ac$)}\label{def:partition}
For every ac subgrid $i \in \N_{[1,N_\ac]}$ of $\mc G_N$ we partition $\mc N^i_\ac \cup \mc N^i_\ad$ as follows:
\begin{enumerate}
\item $|\mc N_\ad^i|+ \mu_{\mc N} < |\mc N_\ac^i|$: ${\mc D}^i\coloneqq\mc N_{\ac^\el}^i \cup \mc N_{\ac^\g}^i \cup \mc N_{\ad^\el}^i \cup \mc N_{\ad^\g}^i$, ${\mc C}^i\coloneqq \mc N_{\ac^\oo}^i$, and  $\mc F^i=\mc N^i_{\ad^\oo}$, \label{def:less.converters}
\item $|\mc N_\ad^i|\geq |\mc N_\ac^i|+\mu_{\mc N}$: ${\mc D}^i\coloneqq \mc N_{\ad}^i$, ${\mc C}^i\coloneqq \mc N_{\ac}^i$, and $\mc F^i=\emptyset$. \label{def:more.converters}
\end{enumerate}
\end{definition}
In a machine-dominated system $\mc D^i$ contains nodes that contribute to frequency stabilization, $\mc C^i$ contains machines without damping (e.g., synchronous condensers), and $\mc F^i$ collects converters without damping (e.g., HVDC converters). In contrast, in a converter-dominated system $\mc D^i$ contains all converters, and $\mc C^i$ all machines. Broadly speaking, this partitioning reflects the dominant stabilization mechanism in each case (i.e., frequency regulation and angle synchronization through $P-f$ droop). Next, we leverage the properties of the nodes $\mc D^i$ to establish synchronization of the nodes $\mc C^i$ and stability the overall system. To this end, we first define the subgraph $\bar{\mc G}^i_0$.
\begin{definition}{\bf(Reduced ac subgrid graph)}\label{def:graph}
For all $i \in \N_{[1,N_\ac]}$, we define the graph $\bar{\mc G}^i_0$ with node set $\bar{\mc N}_0  \coloneqq \mc N_\ac^i \cup \mc N_\ad^i$ and edge set $\bar{\mc E}_0\coloneqq \mc E_\ac^i \setminus (({{\mc C}}^i \times {\mc C}^i)\cup ({\mc D}^i\times {\mc D}^i))$.  
\end{definition}
Notably, for a converter-dominated subgrid, the graph $\bar{\mc G}^i_0$ only contains connections between converters and machines. On the other hand, for a machine-dominated subgrid, the graph $\bar{\mc G}^i_0$ only contains connections between devices that contribute to stabilizing frequency and devices that do not. 

Finally, Algorithm~\ref{alg:removal} identifies ac subgrid topologies for which stability can be guaranteed independently of the exact parameters of the connections (i.e., line susceptances) and control gains. To this end, a node $l \in \mc N$ is defined to be a single-edge node iff there exists only one $j \in \mc N$ s.t. $(l,j)\in \mc E $. Moreover, we define $\mu_{\max} \coloneqq \max\{\mu_{\mc N},\mu_{\mc E}\}$.
\begin{algorithm}[!htpb]
	\caption{Node removal for $i \in \N_{[1,N_\ac]}$}
	\begin{algorithmic}[1]
		\State \textbf{Set} $\bar{\mc G}^i_0$ as in Definition~\ref{def:graph}, $\bar{{\mc C}}^i_0 \coloneqq {\mc C}^i$, and $k\coloneqq 0$
		\While{there exists $\mu_{\max}+1$ single-edge nodes $l \in {\mc D}^i$ with an edge to a node $j \in \bar{{\mc C}}^i_k$}
		\State $\bar{{\mc C}}_{k+1}^i \coloneqq \bar{{\mc C}}_k^i \setminus \{j\}$
		\State $\bar{\mc E}_{k+1}\coloneqq \bar{\mc E}_k \setminus  \{(l,j)\}$, where $(l,j) \in \bar{\mc E}_k \cap ({\mc D}^i \times \bar{{\mc C}}_k^i)$
		\State $\bar{\mc N }_{k+1}^i \coloneqq \bar{\mc N}_k^i \setminus \{j\}$, $\bar{\mc G}^i_{k+1}=(\bar{\mc N}_{k+1}^i,\bar{\mc E}_{k+1}^i)$
		\State $k\coloneqq k+1$
		\EndWhile
	\end{algorithmic}
	\label{alg:removal}
\end{algorithm}
Algorithm~\ref{alg:removal} iteratively removes nodes from $\bar{\mc C}^i_{k} \subseteq \bar{\mc C}^i_0$ for which frequency synchronization to a node in $\mc D^i$ can be guaranteed, e.g., in a  converter-dominated subgrid, machines that synchronize with ac/dc converters are deleted. In a machine-dominated subgrid, machines that do not stabilize frequency but synchronize to converters or machines that stabilize frequency are deleted. Broadly speaking, the algorithm terminates with $\bar{{\mc C}}^i_K =\emptyset$, if devices that do not contribute to stabilizing frequency are sufficiently well connected to devices that stabilize frequency.

\subsection{Asymptotic stability of hybrid ac/dc power systems}\label{subsec:mainresult}
We first note that \eqref{eq:full.dynamics.change.of.coordiantes} is a cascaded system and the dynamics of $\bar{P}$ are trivially asymptotically stable. Thus, to show stability of the overall system, we first establish stability of the dynamics \eqref{eq:full.dynamics.change.of.coordiantes} when $\bar{P}\overset{!}{=}\mathbbl{0}_{\bar{\mc N}_\g}$. To this end, we define the LaSalle function $V \coloneqq V_\eta+V_\omega+V_v+V_P$, with $V_\eta \coloneqq \frac{1}{2}{\eta} ^\T \mc W_\ac  \eta $, $V_\omega \coloneqq   \frac{1}{2} {\omega}^\T M \omega$, $V_v \coloneqq   \frac{1}{2} {v} ^\T \tilde{K}_\theta  C v $, and 	$V_P \coloneqq \frac{1}{2} {P}^\T ({\mc I_{\g,\ac}}^\T {\mc I_{\g,\ac}}+ {\mc I_{\g,\dc}}^\T \tilde{K}_\theta {\mc I_{\g,\dc}}) {K_\g}^{-1} T_\g P$, where $\tilde{K}_\theta\in \R^{|\mc N_\cc \cup \mc N_\dc| \times |\mc N_\cc \cup \mc N_\dc |}$ is defined as $\{\tilde{K}_\theta\}_{(l,l)}\coloneqq k_\theta^i$, for all $i \in \N_{[1,N_\dc]}$, and all $l \in \mc N_\dc^i \cup \mc N_\da^i$ and zero otherwise. Next, we show that $V$ is positive definite with negative semi-definite derivative, along trajectories \eqref{eq:full.dynamics.change.of.coordiantes} when $\bar{P}=\mathbbl{0_{\bar{\mc N}_\g}}$.
\begin{proposition} \label{prop:derivative.expression} {\bf(LaSalle function)} 
Under Condition~\ref{cond:identical.k.theta.gains}, the function $V$ is positive definite and its time derivative of $V$ along the trajectories of \eqref{eq:full.dynamics.change.of.coordiantes} restricted to $\bar{P}=\mathbbl{0}_{|\bar{\mc N}_\g|}$ satisfies 
\begin{align} \label{eq:LaSalles.fcn.derivative}
\begin{split} 
\ddt{V} =& -\mc \eta^\T \mc W_\ac B^\T_\ac \mc I^\T _{\ad} M_p\mc  I_{\ad} B_\ac \mc W_\ac  \eta  -  {\omega}^\T D \omega\! \\
&-\frac{1}{2} {v} ^\T \left (\tilde{K_\theta} (G+L_\dc)+ (G\!+\!L_\dc) \tilde{K}_\theta  \right) v\!\\
&- P^\T (\mc I_{\g,\ac}^\T \mc I_{\g,\ac} + \mc I_{\g,\dc}^\T  \tilde{K}_\theta\mc I_{\g,\dc})K_\g^{-1} P \leq 0.
\end{split}
\end{align}
\end{proposition}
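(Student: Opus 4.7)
The plan is to first verify that $V$ is positive definite by inspecting its four summands, then compute $\dot V$ along \eqref{eq:full.dynamics.change.of.coordiantes} with $\bar P\equiv \mathbbl{0}_{|\bar{\mc N}_\g|}$, and finally exhibit the pairwise cancellation of all cross terms, leaving only the sign-definite quadratics in \eqref{eq:LaSalles.fcn.derivative}. The main obstacle is signing the $v$-quadratic, which hinges on a second, subtle use of Condition~\ref{assump:identical.k.theta.gains}.

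For positive definiteness, $V_\eta$ and $V_\omega$ are positive definite because $\mc W_\ac,M\succ 0$. Under Condition~\ref{assump:identical.k.theta.gains} the diagonal $\tilde K_\theta$ has entry $k_\theta^i>0$ on every node of $\mc N_\cc\cup\mc N_\dc$, so $\tilde K_\theta C\succ 0$ and $V_v$ is positive definite. Finally, because each source in $\mc N_\g$ is attached either to a machine or to a dc/converter node (but not both), the matrices $\mc I_{\g,\ac}^\T \mc I_{\g,\ac}$ and $\mc I_{\g,\dc}^\T \tilde K_\theta \mc I_{\g,\dc}$ are diagonal with disjoint supports whose sum has strictly positive diagonal; multiplying by the positive diagonal $K_\g^{-1}T_\g$ gives $V_P\succ 0$.

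Next I would differentiate each summand and substitute $\dot\eta, M\dot\omega, C\dot v, T_\g \dot P$ from the rows of \eqref{eq:full.dynamics.change.of.coordiantes}. Several pairwise cross-term cancellations then occur. The $\eta$-$\omega$ cross terms in $\dot V_\eta$ and $\dot V_\omega$ coincide by symmetry of $\mc W_\ac$ but enter with opposite signs, hence cancel. The $\eta$-$v$ cross terms cancel precisely under Condition~\ref{assump:identical.k.theta.gains}, because the identity $\mc I_\ad^\T K_\theta \mc I_\da = \mc I_\ad^\T \mc I_\da \tilde K_\theta$ holds if and only if the converter-indexed entries of $K_\theta$ equal the corresponding diagonal entries of $\tilde K_\theta$. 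Finally, the $\omega$-$P$ and $v$-$P$ cross terms cancel because sources are disjointly connected to ac or to dc, yielding $\mc I_{\g,\dc}^\T \tilde K_\theta \mc I_{\g,\dc} \mc I_{\g,\ac}^\T = 0$, $\mc I_{\g,\ac}^\T \mc I_{\g,\ac}\mc I_{\g,\dc}^\T = 0$, $\mc I_{\g,\ac}^\T \mc I_{\g,\ac}\mc I_{\g,\ac}^\T = \mc I_{\g,\ac}^\T$, and $\mc I_{\g,\dc}^\T \tilde K_\theta \mc I_{\g,\dc}\mc I_{\g,\dc}^\T = \mc I_{\g,\dc}^\T \tilde K_\theta$, which match the corresponding cross terms coming from $\dot V_\omega$ and $\dot V_v$ exactly.

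What remains are the four quadratics in \eqref{eq:LaSalles.fcn.derivative}. The $\eta$, $\omega$, and $P$ terms are manifestly non-positive from $M_p,D\succeq 0$, $K_\g^{-1}\succ 0$, and the positive semidefiniteness of the $V_P$ matrix. The main obstacle is signing the $v$-term: one must show that $\tilde K_\theta(G+L_\dc)+(G+L_\dc)\tilde K_\theta \succeq 0$. Here Condition~\ref{assump:identical.k.theta.gains} is used a second time: $L_\dc$ is block-diagonal with one block per dc subgrid, and on each such block $\tilde K_\theta$ reduces to the scalar $k_\theta^i I$, so $\tilde K_\theta$ commutes with $L_\dc$ and (being diagonal) with $G$. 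Hence $\tilde K_\theta(G+L_\dc)=\tilde K_\theta^{1/2}(G+L_\dc)\tilde K_\theta^{1/2}\succeq 0$, which yields the desired sign and completes the proof.
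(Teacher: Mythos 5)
Your proposal is correct and follows essentially the same route as the paper's proof: the same term-by-term differentiation, the same cross-term cancellations via $\tilde K_\theta \mc I_\da^\T = \mc I_\da^\T K_\theta$ and the projection identities $\mc I_{\g,\ac}^\T \mc I_{\g,\ac}\mc I_{\g,\ac}^\T = \mc I_{\g,\ac}^\T$, $\mc I_{\g,\ac}\mc I_{\g,\dc}^\T = 0$, and the same use of Condition~\ref{assump:identical.k.theta.gains} to reduce $\tilde K_\theta^i L_\dc^i + L_\dc^i \tilde K_\theta^i$ to $2k_\theta^i L_\dc^i \succeq 0$ blockwise. The only cosmetic difference is that you phrase positive definiteness of the $V_P$ weight via disjoint supports rather than via $\mc I_{\g,\ac}^\T\mc I_{\g,\ac}+\mc I_{\g,\dc}^\T\mc I_{\g,\dc}=I$ and full column rank of the stacked incidence matrices, which is the same fact.
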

The proof is given in the Appendix. From \eqref{eq:LaSalles.fcn.derivative} one can conclude that, when $\bar{P}=\mathbbl{0}_{|\bar{\mc N}_\g|}$, all dc voltages within every dc subgrid synchronize, all power deviations $P$ converge to zero, the frequency deviation of the machines with losses and dc voltage deviation of nodes with losses converge to zero, and the angles of the machines and dc/ac converters partially synchronize. To establish convergence of the remaining variables the following proposition characterizes the largest invariant $\mc M$ set contained in $\mc S \coloneqq \{x \in  \R^n \vert \ddt V(x(t)) =0 \}$, i.e., $\mc M \subseteq \mc S$ and $x(t) \in \mc M$ for all $t\geq0$ if $x(0) \in \mc M$, where $x\coloneqq (\eta,\omega,v,P)\in \R^{n}$ and $n\coloneqq |\mc E_\ac|+ |\mc N_\ac| + |\mc N_\cc|+|\mc N_\dc|+|\mc N_g|$. 

\begin{proposition} {\bf(Largest invariant set)} \label{prop:max.inv.set}
If for all $i \in \N_{[1,N_\ac]}$, there exists $K^i \in \N$ such that Algorithm~\ref{alg:removal} terminates with $\bar{{\mc C}}_{K^i}^i \! \! = \! \emptyset$, Condition~\ref{cond:identical.k.theta.gains} holds, and Assumption~\ref{assum:N-mu} and \ref{assump:onestab} hold, then for all systems obtained by deleting at most $\mu_{\mc N}$ nodes and $\mu_{\mc E}$ edges from \eqref{eq:full.dynamics.change.of.coordiantes}, 
the origin is the largest invariant set contained in $\mc M$.
\end{proposition}
The proof is given in the Appendix. We can now state the following stability result.
\begin{theorem}{\bf{(Stability of hybrid ac/dc power systems)}} \label{thm:main.theorem}If for all $i \in \N_{[1,N_\ac]}$, there exists $K^i \in \N$ such that Algorithm~\ref{alg:removal} terminates with $\bar{{\mc C}}_{K^i}^i = \emptyset$ and Condition~\ref{cond:identical.k.theta.gains} and Assumptions~\ref{assum:N-mu} and \ref{assump:onestab} hold, then all systems obtained by deleting at most $\mu_{\mc N}$ nodes and $\mu_{\mc E}$ edges from \eqref{eq:full.dynamics.change.of.coordiantes} are asymptotically stable with respect to the origin.
\end{theorem}
\begin{proof} We first note that $V$ is positive definite, i.e., all sublevel sets of $V$ are bounded. Next, by Proposition~\ref{prop:derivative.expression} the derivative of $V$ along the trajectories of \eqref{eq:full.dynamics.change.of.coordiantes} is negative semidefinite if $\bar{P}\overset{!}{=}\mathbbl{0}_{|\bar{\mc N}_\g|}$ and Condition~\ref{cond:identical.k.theta.gains} holds. Thus, according to LaSalle invariance principle, \cite[Theorem 1]{L1960}, the trajectories \eqref{eq:full.dynamics.change.of.coordiantes} converge to the maximal invariant set contained in $\ddt V=0$ if $\bar{P}=\mathbbl{0}_{\bar{\mc N}_\g}$. According to Proposition~\ref{prop:max.inv.set}, for all systems obtained by deleting at most  $\mu_{\mc N}$ nodes and $\mu_{\mc E}$ edges from \eqref{eq:full.dynamics.change.of.coordiantes} the origin is  the maximal invariant set inside $\ddt V=0$. Consequently, the origin is a (uniformly) asymptotically stable equilibrium point of  all systems obtained by deleting at most most  $\mu_{\mc N}$ nodes and $\mu_{\mc E}$ edges from \eqref{eq:full.dynamics.change.of.coordiantes} when $\bar{P}= \mathbbl{0}_{|\bar{\mc N}_\g|}$. Next, we note that $\bar{T}_\g \ddt \bar{P}=-\bar{P}$ is (uniformly)  asymptotically stable and, applying \cite[Theorem 3.1]{V1980}, the proof follows.
\end{proof}
The proof of Theorem~\ref{thm:main.theorem} first establishes synchronization within each ac subgrid and dc subgrid. Next, Assumption~\ref{assump:onestab} is used to establish that at least one frequency deviation or dc voltage deviation converges to zero (for details see Proof of Proposition~\ref{prop:max.inv.set}). Together with frequency and dc voltage synchronization, this establishes stability of the overall hybrid ac/dc power system \eqref{eq:full.dynamics.change.of.coordiantes}. Notably, because Algorithm~\ref{alg:removal} does not use the control gains, Theorem~\ref{thm:main.theorem} ensures stability for all positive control gains that satisfy Condition~\ref{cond:identical.k.theta.gains}. Moreover, our conditions do not restrict the network parameters of ac subgrids that only contain converters (i.e., $\mc N^i_\ac = \emptyset$) because the dual-port GFM control phase angles within such an ac subgrid synchronize independently of the network parameters.

Applying Algorithm \ref{alg:removal} to simple graph structures (e.g., cycle graphs) and stability of the system with nominal graph ($\mu_{\max}\!=\!0$) results in the following corollary that highlights the importance of edges between nodes in ${\mc C}^i$ and ${\mc D}^i$.
\begin{corollary}{\bf(Simple network structures)} \label{cor:cycles}
For $\mu_{\max}=0$ and all $i \in \N_{[1,N_\ac]}$, if every node in ${\mc C}^i$ either
\begin{enumerate}
\item has an edge to a single-edge node in $\mc D^i$, or \label{cor:cycles.1}
\item is part of a cycle that contains a node from ${\mc C}^i$ that has an edge to a single edge-node in ${\mc D}^i$, \label{cor:cycles.2}
\end{enumerate}
then there exists $K^i \in \N$ such that Algorithm~\ref{alg:removal} terminates with $\bar{{\mc C}}_{K^i}^i = \emptyset$. 
\end{corollary}

In the most general case (e.g., allowing for synchronous condensers), the stability conditions depend on the system topology. By posing stronger requirements on the devices contained in each ac subgrid, the following topology independent result can be obtained.
	
\begin{corollary}{\bf(Topology independent conditions)} \label{cor:topology.independent}
If Condition \ref{cond:identical.k.theta.gains} and Assumption \ref{assum:N-mu} and \ref{assump:onestab} hold, and for all $i\in \N_{[1,N_\ac]}$ either $\mc N_\ac^i=\emptyset$ or $\mc N_{\ac^\oo}^i=\emptyset$ holds, then, all systems obtained by deleting at most $\mu_{\mc N}$ nodes and $\mu_{\mc E}$ edges from \eqref{eq:full.dynamics.change.of.coordiantes} are asymptotically stable with respect to the origin.
\end{corollary}
The conditions of Corollary \ref{cor:topology.independent} imply that $\mc C^i = \emptyset$ for all $i\in \N_{[1,N_\ac]}$ and the proof immediately follows from the fact that Algorithm \ref{alg:removal} terminates at the first iteration. Specifically, Corollary \ref{cor:topology.independent} requires that each ac network $i\in \N_{[1,N_\ac]}$ either (i) only contains converters (i.e., $\mc N_\ac^i=\emptyset$), or (ii) all machines are equipped with a turbine governor system or have significant losses (i.e., $\mc N_{\ac^\oo}^i=\emptyset$). 

We emphasize that Corollary \ref{cor:topology.independent} for $N_\ac=1$, $N_\dc=0$, and $\mc N_\ac^1=\emptyset$, recovers standard conditions for stability of a network of ac-GFM converters with the dc terminal modeled as constant voltage source. However, in addition, Corollary \ref{cor:topology.independent} also includes machines, converters interfacing renewable generation with limited flexibility, and dc transmission. Moreover, using dual-port GFM control, topology independent stability conditions for hybrid ac/dc power systems can be established. In contrast, standard ac-GFM and ac-GFL controls require assigning ac-GFM and ac-GFL controls to each converter interfacing ac and dc networks, which typically requires knowledge of the system topology \cite{GO+21}.

The next section illustrates that Corollary~\ref{cor:cycles} is directly applicable for several common scenarios.

\subsection{Illustrative examples} \label{subsec:illustrative.examples}
To illustrate the stability conditions, we consider the converter-dominated and machine-dominates subgrids shown in Fig.~\ref{fig:ac.area.types} and the back-to-back wind turbine shown in Fig.~\ref{fig:b2bwttop}.
\begin{figure}[h]
\centering
\subfloat[][]{\includegraphics[trim=0em 0.3em 0em 0.3em, clip,height=0.15\textwidth]{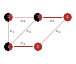} 
	\label{fig:a.converter.domimnated.example}}
\subfloat[][]{\includegraphics[trim=0em 0.3em 0em 0.3em, clip,height=0.15\textwidth]{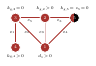} 
	\label{fig:b.machine.dominated.example}}
\caption{Examples for a converter-dominated ac subgrid (a) and machine-dominated ac subgrid (b). Edges not contained in $\bar{\mc G}^i_0$ are shown in light red.} \label{fig:ac.area.types}
\end{figure}
\subsubsection{Converter-dominated ac subgrid} applying Definition~\ref{def:partition}.\ref{def:more.converters} and Definition~\ref{def:graph}, ${\mc C}=\{4,5\}$, ${\mc D}=\{1,2,3\}$, and $\bar{\mc E}_0=\{e_4,e_5\}$ holds. Hence, in $\bar{\mc G}^i_0$, the nodes $\{1,3\} \in \mc C$ are single-edge nodes and from Corollary~\ref{cor:cycles}.\ref{cor:cycles.1} it directly follows that there exists $K^i\in \N$, such that Algorithm \ref{alg:removal} terminates with $\bar{\mc C}^i_{K^i} \!\! =\! \emptyset$.
\subsubsection{Machine-dominated ac subgrid} applying Definition~\ref{def:partition}.\ref{def:less.converters} we obtain ${\mc C}=\{3\}$, ${\mc D}=\{1,2,4\}$, $\mc F=\{5\}$, where, e.g., node $3$ models a synchronous condenser and node $5$ models a STATCOM, HVDC converter, or PV operating at its MPP. Using Definition~\ref{def:graph} it follows that $\bar{\mc E}_0=\{e_1,e_2,e_4,e_5,e_6\}$ and $\bar{\mc G}_0$ contains a cycle in which a node from ${\mc C}$ has an edge to a single-edge node from ${\mc D}$. From Corollary~\ref{cor:cycles}.\ref{cor:cycles.2}, there exists $K^i\!\in\! \N$, such that Algorithm \ref{alg:removal} terminates with $\bar{\mc C}^i_{K^i} \!\! =\! \emptyset$.
\subsubsection{Wind turbine or flywheel energy storage with back to back converter} Irrespective of the operating point of the wind turbine (i.e., $k_\g=0$ or $k_\g>0$), the subgrid ac~2 in Fig.~\ref{fig:b2bwttop} is converter-dominated and Corollary~\ref{cor:cycles}.\ref{cor:cycles.2} trivially applies. This result also applies to common flywheel energy storage systems (i.e., Fig.~\ref{fig:b2bwttop} without the wind turbine).  In addition, for ac 1 we require existence of $K^1 \! \in \!\N$ such that Algorithm \ref{alg:removal} terminates with $\bar{\mc C}^1_{K^1}=\emptyset$, and that the converter dc voltage - frequency droop gains satisfy Condition~\ref{cond:identical.k.theta.gains}. 
\subsubsection{Offshore wind farm} in an offshore wind farm containing wind turbines with back to back converters, the subgrid ac~1 in Fig.~\ref{fig:b2bwttop} only contains the grid-side converters of the wind turbines and an HVDC converter. In other words, $\mc N^1_\ac=\emptyset$,  and we only require condition~\ref{cond:identical.k.theta.gains} for the dc networks and for ac~1 we require existence of $K^1 \! \in \!\N$ such that Algorithm \ref{alg:removal} terminates with $\bar{\mc C}^1_{K^1}=\emptyset$.

\section{Case study} \label{sec:test.case.study}
In this section, we present a case study that combines ac and dc  transmission as well as conventional generation and PV. We emphasize that a mix of at least three different conventional GFM and GFL controls would be needed to operate this system. Subsequently, we apply our stability conditions and use a high-fidelity simulation to illustrate applying the proposed dual-port GFM control for all converters.

\subsection{Hybrid AC/DC system with renewable generation}
Consider the power system shown in Fig.~\ref{fig:ieee9bus.test.case} that consists of two IEEE-9 bus systems (ac~1 and ac~2) interconnected by an HVDC link (dc~1). The first ac subgrid contains conventional thermal generation (TG) interfaced by a synchronous machine (SM) with automatic voltage regulator (AVR) and power system stabilizer (PSS), a two-level voltage source converter (VSC) that interfaces a controllable dc source (e.g., a large-scale battery), a VSC that interfaces photovoltaics (PV), and a VSC that interfaces the subgrids ac~1 and dc~1. The HVDC link has a length of $310~\mathrm{km}$ and connects the subgrid ac~1 that has significant frequency control reserves (i.e., thermal generation and controllable dc source) with the subgrid ac~2 that only contains renewable generation (i.e., PV) interfaced by VSCs and a synchronous condenser (SC). The base frequency is  $f_b=50~\mathrm{Hz}$ for ac~1 and ac~2 and the base voltages and device base powers can be found in Table~\ref{tab:voltagelevels}. The parameters of the synchronous machines, low-voltage da/ac converters (i.e., VSC$_1$ to VSC$_4$), and low-voltage/high-voltage and medium-voltage/high-voltage transformer can be found in \cite[Table I]{TGA+20}. Moreover, the high-voltage/high-voltage transfomer admittance is $0.0027+j0.8~\mathrm{p.u.}$, and the HVDC converter and line parameters can be found in \cite{BDB+16} and \cite{JTM2015}. The PV systems $\text{PV}_1$, $\text{PV}_2$, and $\text{PV}_3$ aggregate 1200, 5000, and 3000 parallel strings of 90 or 100 ($\text{PV}_3$) modules (AUO PM060MBR). $\text{PV}_2$ and $\text{PV}_3$ operate above the MPP voltage to provide primary control (i.e., $k_\g>0$) while $\text{PV}_1$ operates at the MPP (i.e., $k_{\g}=0$) and resembles ac-GFL operation (see Fig.~\ref{fig:pvcharacteristic}).
\begin{figure}[htb!!]
 	\centering 
 	\includegraphics[width=1\columnwidth]{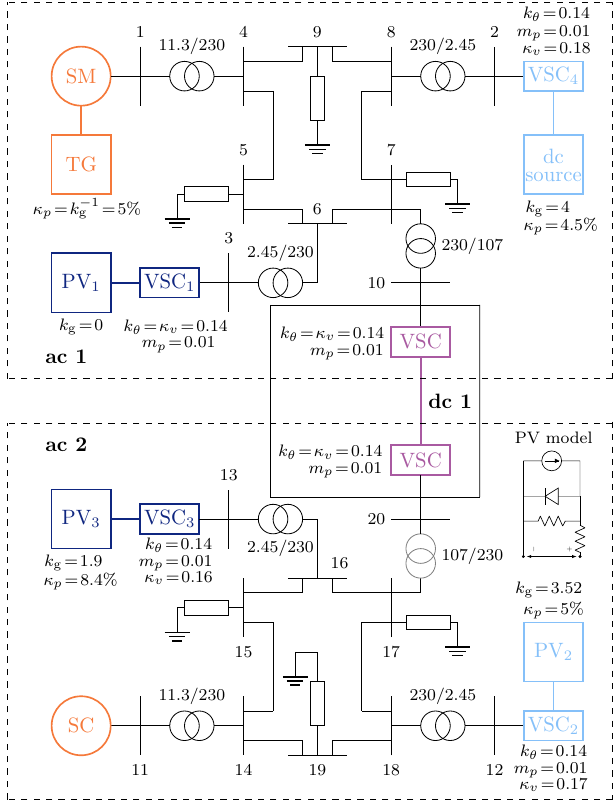} 
 	\caption{Power system with ac transmission, dc transmission, power converters, machines, photovoltaics, and conventional generation. The control gains and power source sensitivities are shown in per unit, and the base voltages for each transformer side are shown in Kilovolt.}\label{fig:ieee9bus.test.case}
\end{figure}
\begin{figure}[h!!]
	\centering 
	\includegraphics[trim=0em 0.3em 0em 0.3em,width=0.8\columnwidth]{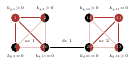} 
	\caption{Graph of the system in Fig.~\ref{fig:ieee9bus.test.case} after applying Kron reduction to the ac subgrids. Edges not contained in $\bar{\mc G}^i_0$ are shown in light red.\label{fig:ieee9graph}}
\end{figure}
\begin{table}[htb!]
	\caption{Base powers and base voltages\label{tab:voltagelevels}}
	\centering
	\resizebox{0.9\columnwidth}{!}{
		\begin{tabular}{ l c c c c}
			\toprule 
			& LV  & MV & HV & HVDC \\ \cmidrule{2-5}
			$V_b^\text{dc}$ [kV] & 2.4495 & $/$ & $/$ & 320 \\
			$V_b^\text{ac}$ [kV] & 0.8165& 11.27& 230&106.67 \\
			$S_b$ [MW] &  100 &100 &100 & 210 \\  \cmidrule{2-5}
			Bus no. & 2, 3, 12, 13& 1, 11& 4- 9, 14-19 & 10, 20 \\
			\bottomrule
		\end{tabular}}
\end{table}
\begin{figure*}[t!!!]
\centering
\includegraphics[width=1\textwidth]{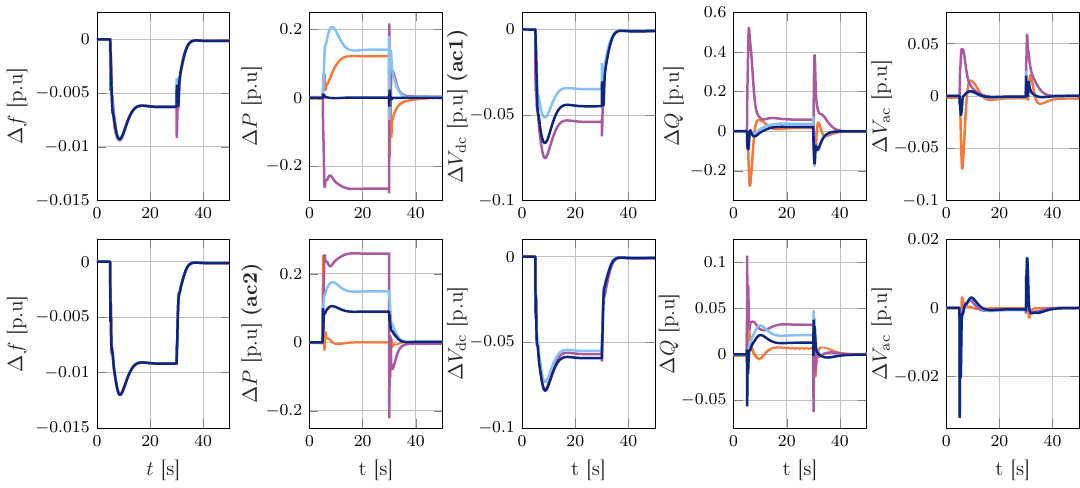} 
\caption{Simulation results for a $0.5~\mathrm{p.u.}$ load step at bus 17 and subsequent redispatch of the power system in Fig.~\ref{fig:ieee9bus.test.case}. The plots show the deviation from the setpoints for each device (the color-scheme is identical to Fig.~\ref{fig:ieee9bus.test.case}). \label{fig:whole.example}}
\end{figure*}
\subsection{Illustration of the  theoretical results}
To verify our assumptions we first apply Kron reduction \cite{DB2013} to the ac subgrids. The resulting graph of the overall system in Fig.~\ref{fig:ieee9bus.test.case} is shown in Fig.~\ref{fig:ieee9graph}. At the operating point under consideration, both ac~1 and ac~2 contain devices with $k_\g>0$ and Assumption~\ref{assump:onestab}.\ref{assump:onestab.ac} is trivially satisfied. Moreover, both ac subgrids are converter-dominated and, considering Definition~\ref{def:partition}.\ref{def:more.converters} and Definition~\ref{def:graph}, we obtain ${\mc D}^1=\{2,3,10\}$, ${\mc C}^1=\{1\}$, and $\bar{\mc E}_0^1=\{e_{1,3},e_{1,2},e_{1,10}\}$, as well as ${\mc D}^2=\{12,13,20\}$, ${\mc C}^2=\{11\}$, and $\bar{\mc E}_0^2=\{e_{11,13},e_{11,12},e_{11,20}\}$. Thus, for each $i \in \{1,2\}$, every node in $\mc C^i$ has an edge to a single-edge node in $\mc D^i$ and it follows from Corollary~\ref{cor:cycles} that there exist $K^i \!\in \!\N$ such that Algorithm \ref{alg:removal} terminates with $\bar{\mc C}^i_{K^i}\!\!=\!\emptyset$. Therefore, if the control gains satisfy Condition~\ref{cond:identical.k.theta.gains}, asymptotic stability of \eqref{eq:full.dynamics.change.of.coordiantes} is guaranteed by Theorem~\ref{thm:main.theorem}.
\subsection{Simulation results}
Finally, we use high-fidelity simulation results obtained using SimPowerSystems in MATLAB/Simulink to illustrate and validate the results. AC lines and the DC cable are modeled using the standard $\pi$-line dynamics \cite{PWS-MAP:98,JTM2015} and transformers are explicitly modeled using dynamical models. The simulation uses an $8^\text{th}$ order synchronous machine model with AC1A exciter model and automatic voltage regulator. In addition, the machine in ac~1 features a delta-omega power system stabilizer, and first order turbine model with 5\% speed droop. An averaged model of a two-level voltage source converter with RLC filter and cascaded inner current and voltage PI controllers are used (cf. \cite{TGA+20}). The voltage angle reference is provided by the dual-port GFM control \eqref{eq:control.law}. For simplicity, we use identical gains for all VSCs that result in an effective droop gain $\kappa_p$ of approximately $4.5\%$ to $8.5\%$ for converter-interfaced generation and $\kappa_v \geq 0.1$ for all VSCs (see Fig.~\ref{fig:ieee9bus.test.case}). Moreover, the voltage magnitude reference is obtained using standard reactive power-voltage droop with 3\% droop \cite{DSF2015}. Finally, the aggregated PV modules are modeled using an equivalent (see Fig.~\ref{fig:ieee9bus.test.case} and \cite{GRR2009}).

We simulate a load-step of $0.5~\mathrm{p.u.}$ at bus 17 and $t=5~\mathrm{s}$. At $t=30~\mathrm{s}$ the power setpoints of the turbine, dc source, and converters, are updated to return the system to the nominal frequency. The setpoints are provided in Table~\ref{tab:set.points} and the corresponding PV operating points are shown in Fig.~\ref{fig:pvcharacteristic}.

\begin{table}[b!!]
\caption{Setpoints (rounded)\label{tab:set.points}}
\resizebox{1\columnwidth}{!}{
\begin{tabular}{c l c c c c c c c c}
\toprule 
& & \multicolumn{4}{c}{Initial setpoints [p.u.]} & \multicolumn{4}{c}{Updated setpoints [p.u.]} \\  
& & $P$ & $V_\text{dc}$& $Q$ & $V_\text{ac}$& $P$ & $V_\text{dc}$& $Q$ & $V_\text{ac}$ \\ \midrule
\parbox[t]{2mm}{\multirow{4}{*}{\rotatebox[origin=c]{90}{ac 1}}} & SM & 1.07 &$/$ & $/$ & 1 & 1.28&$/$ & $/$&1 \\ 
& HVDC & -0.57 & 0.98 & 0.04& 1  &    -1.08 & 1& 0.16 & 1\\ 
&$\text{VSC}_1$ & 0.27 & 1.20 & -0.05 &1&  0.27 & 1.20 & -0.004 & 1\\  
&$\text{VSC}_4$ & 1.59 &0.98 & 0.02 & 1 & 1.9 & 0.97 & 0.11& 1 \\ \midrule
\parbox[t]{2mm}{\multirow{4}{*}{\rotatebox[origin=c]{90}{ac 2}}} & SC & 0 &$/$ & $/$ & 1 & 0&$/$ & $/$&1 \\ 
& HVDC & 0.56 & 0.98 & -0.08 & 1& 1.07& 0.99 & -0.05&1\\
&$\text{VSC}_2$ & 1.1& 1.32& -0.03& 1 & 1.1 & 1.32 &-0.03  & 1\\  
&$\text{VSC}_3$ & 0.7 & 1.46 & -0.09 &1 &  0.7 & 1.46 & -0.06 & 1  \\
\bottomrule
\end{tabular}
}
\end{table}

\begin{figure}[b!!!]
\centering 
\includegraphics[width=1\columnwidth]{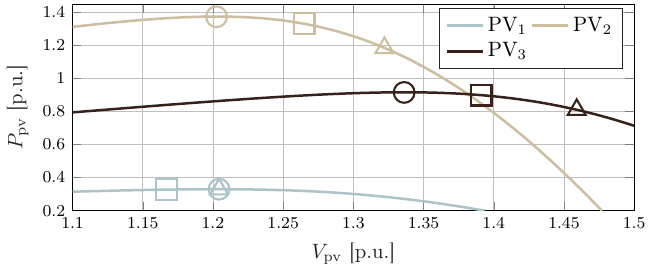} 
\caption{$P-V$ curve of $\text{PV}_1$, $\text{PV}_2$, and $\text{PV}_3$: MPP (circle), nominal operating point (triangle), and post-disturbance steady-state (square).\label{fig:pvcharacteristic}}  	
\end{figure}

The resulting deviations of the frequency, dc voltage, active power, ac voltage, and the reactive power from their setpoints is shown in Fig.~\ref{fig:whole.example}. We emphasize that the $0.5$~p.u. load step is very large and pushes the system to the boundary of the normal operating range. Nonetheless, the system dynamics are well-behaved. As predicted, power imbalances propagate to all ac and dc subgrids and the power sources share the additional load according to their sensitivities and converter control gains. 

The synchronous machine in ac~1 and VSC$_4$ provide primary frequency control (i.e., VSC$_4$ exhibits ac-GFM functions). Moreover, the PV systems $\text{PV}_2$ and $\text{PV}_3$ in the subgrid ac~2 increase their power generation and operate closer to their limit (MPP). In other words, we observe that the converter interfacing power generation with available headroom provide grid-support analogous to standard ac-GFM control. In contrast, the power generation of $\text{PV}_1$ is  approximately constant and resembles an ac-GFL control with maximum power point tracking. In other words, the dual-port GFM control keeps the power output of $\text{PV}_1$ approximately at the MPP. Finally, by mapping power imbalances between ac~1 and ac~2 the VSC-HVDC system autonomously leverages the reserves in ac~1 to provide GFM functions to ac~2. While standard VSC-HVDC controls require assigning ac-GFM/dc-GFL and ac-GFL/dc-GFM functions of the VSCs at the design stage \cite{GO+21}, the proposed dual-port GFM control law inherently achieves the desired behavior without assigning GFM and GFL roles by mapping power imbalances between the areas.

After the setpoint update additional generation is provided by the turbine governor system and dc source in ac~1 while the PV system returns to its nominal operating point (marked by a triangle in Fig.~\ref{fig:pvcharacteristic}).

\section{Conclusion and outlook} \label{sec:conclusion}
In this paper, we proposed a novel grid-forming (GFM) control paradigm for dc/ac voltage source converters that simultaneously imposes the converter ac voltage and controls the dc voltage (dual-port GFM). Conceptually, dual-port GFM control unifies standard functions of GFM and GFL converters used to interface, e.g., renewable generation and HVDC transmission. We developed a graph representation of power system combining ac and dc subgrids and reduced-order linear dynamical models of converters, machines, and power sources that, in abstraction, model a wide range of devices. For this complex class of power systems, we obtained stability conditions that only require partial knowledge of the systems topology. Finally, we used a high-fidelity case study to illustrate the main features uncovered by our theoretical analysis. While these results are encouraging, there is a need for more detailed studies to understand how to leverage the results in applications such as wind turbine control. Moreover, converter current limiting is a crucial aspect that is well understood for ac-GFL control, but requires further study for dual-port GFM control.

\appendix \label{sec:proofs}

\textit{Proof of Proposition~\ref{prop:derivative.expression}:}	Note that $\mc W_\ac$, $M$, $C$, $K_\g$, $T_\g$ and $\tilde{K}_\theta$ are positive definite matrices. By construction $\mc I_{\g,\ac}^\T \mc I_{\g,\ac}+\mc I_{\g,\dc}^\T \mc I_{\g,\dc}$  is an identity matrix and, because $[\mc I_{\g,\ac}^\T,\mc I_{\g,\dc}^\T]^\T$ has full column rank, $\mc I_{\g,\ac}^\T \mc I_{\g,\ac}+\mc I_{\g,\dc}^\T  \tilde{K}_\theta \mc I_{\g,\dc} \succ 0$. Therefore, $V$ is positive-definite. Moreover, the time derivative of $V_\eta$, $V_{\omega}$, $V_v$ along the trajectories of \eqref{eq:full.dynamics.change.of.coordiantes} restricted to $\bar{P}=\mathbbl{0}_{|\bar{\mc N}_\g|}$ are 
\begin{align*}
\ddt V_\eta =& - \eta^\T \mc W_\ac B_\ac^\T\mc I_{\ad}^\T M_p \mc I_{\ad}  B_\ac \mc W_\ac  \eta + \eta^\T \mc W_\ac B_\ac^\T \mc  I_{\ac}^\T \omega\\
&+ \eta^\T  \mc W_\ac B_\ac^\T  \mc I_{\ad}^\T K_\theta \mc I_{\da} v,\\
\ddt V_{\omega} =&-\omega^\T D \omega  - \omega^\T \mc I_{\ac} B_\ac \mc W_\ac \eta + \omega^\T \mc I_{\g,\ac}  P,\\
\ddt V_v=&	- \frac{1}{2}v^\T \left( \tilde{K}_\theta (G+L_\dc) + (G+L_\dc) \tilde{K}_\theta   \right)v  \\
&-v^\T \tilde{K}_\theta \mc I_{\da} ^\T \mc  I_{\ad} B_\ac \mc W_\ac \eta + v^\T \tilde{K}_\theta \mc I_{\g,\dc} P.
\end{align*}
Next, we note that  $\mc I_{\g,\ac}^\T \mc I_{\g,\ac} \mc I_{\g,\ac}^\T\!=\! \mc I_{\g,\ac}^\T$, $\mc I_{\g,\dc}^\T \mc I_{\g,\dc} \mc I_{\g,\dc}^\T \!=\! \mc I_{\g,\dc}^\T$, and $\mc I_{\g,\ac} \mc I_{\g,\dc}^\T\!=\!\mathbbl{0}_{|\mc N_\g| \times |\mc N_\dc \cup \mc N_\cc|}$ and obtain
\begin{align*}
\ddt V_P\!&=\!-\! P^\T\! (\mc I_{\g,\ac}^\T \mc I_{\g,\ac} \!+\!\mc I_{\g,\dc}^\T \tilde{K}_\theta \mc I_{\g,\dc}) (\mc I_{\g,\ac}^\T \omega \!+\! \mc I_{\g,\dc}^\T v\!+\!K^{-1}\!P)
\end{align*}
Adding the derivatives of functions $V_\eta$, $V_{\omega}$, $V_v$ and $V_P$, and using $\tilde{K}_\theta \mc I_\da^\T\!=\!\mc I_\da^\T K_\theta$ to cancel cross terms results in \eqref{eq:LaSalles.fcn.derivative}.
Next, we show that $\ddt{V}$ is negative semi-definite. Because $M_p \succ 0$, $\mc W_\ac B_\ac^\T \mc I_\ad^\T M_p \mc I_\ad B_\ac \mc W_\ac \succeq 0$ holds. Moreover, by definition {$D \succeq 0$}, and {$G \succeq 0$}, and $\tilde{K}_\theta G =G \tilde{K}_\theta$. 
Next, note that $\frac{1}{2} v^\T (\tilde{K}_\theta L_\dc + L_\dc \tilde{K}_\theta )v=\frac{1}{2} \sum_{i=1}^{N_\dc} {v^i}^\T (\tilde{K}^i_\theta L_\dc^i + L_\dc^i\tilde{K}^i_\theta ) v^i$ and for all $i \in \N_{[1, N_\dc]}$, by definition,  $\tilde{K}_\theta^i=\diag \{k_\theta^i\}$. Hence, $\tilde{K}^i_\theta L_\dc^i\! +\! L_\dc^i \tilde{K}^i_\theta =2k_\theta^i L_\dc^i \succeq 0$. Moreover, $\mc I_{\g,\ac}^\T \mc I_{\g,\ac}+\mc I_{\g,\dc}^\T  \tilde{K}_\theta \mc I_{\g,\dc} \succ 0$ is a diagonal matrix and we can conclude that  $\ddt V\leq 0$ holds. \hfill \QED
	
To prove Proposition \ref{prop:max.inv.set}, we introduce vectors $\eta^i$, $\omega^i$, $v_\ad^i$ that correspond to the angle differences, frequencies, and the dc voltages of the devices connected to the $i^\text{th}$ ac subgrid. Analogously to  $\mc I_\ac$ and $\mc I_\ad$ (cf. Sec.~\ref{subsec:power.system.model}), we define matrices $\mc I_\ac^i \in \{ 0,1 \}^{ |\mc N_\ac^i| \times (|\mc N_\ad^i \cup \mc N_\ac^i|)}$ and $\mc I_\ad^i \in \{0,1\}^{ |\mc N_\ad^i| \times (|\mc N_\ad^i \cup \mc N_\ac^i|)}$ that extract the machine and converter variables of the  $i^\text{th}$ ac subgrid, i.e., $\mc I_\ac^i \theta^i$ is the vector of machine angles. Moreover, $\mc I_{\ac^\g}^i$, $\mc I_{\ac^\el}^i$ and $\mc I_{\ac^\oo}^i$ extract variables of machines with power sources with $k_{\g,i}>0$, losses $d_i>0$, and the remaining machines. Similarly, $\mc I_{\ad^\g}^i$, $\mc I_{\ad^\el}^i$ and $\mc I_{\ad^\oo}^i$ extract variables of converters with power sources with $k_{\g,i}>0$, losses $g_i>0$, and the remaining converters. Next, the  following Lemma is needed for the proof of Proposition \ref{prop:max.inv.set}.
\begin{lemma} \label{lemma:full.column.rank}
Let $X_0^i=\mc I_\ad^i L_\ac^i \mc I_\ac^i$ if $|\mc N_\acdc^i| \geq |\mc N^i_\ac| + \mu_{\mc N}$ and 
\begin{align*}
X^i_0 = \begin{bmatrix}
\mc I_{\ac^\el}^i \mc I_\ac^i \\ \mc I_{\ac^\g}^i \mc I_\ac^i \\ \mc I_\ad^i
\end{bmatrix} \!\! L_\ac^i \!\! \begin{bmatrix} \mc I_{\ac^\oo}^i \mc I_\ac^i\\ \mc I_{\ad^\oo}^i  \mc I_\ad^i \end{bmatrix}^\T 
\end{align*}
if  $|\mc N_\acdc^i| + 
\mu_{\mc N} < |\mc N^i_\ac|$. If, for all $i \in \N_{[1,N_\ac]}$ with $\mc N_\ac^i\neq \emptyset$, there exists $K^i \in \N$ such that Algorithm \ref{alg:removal} terminates with $\bar{\mc C}_{K^i}^i =\emptyset$, then $X_0^i$ has full column rank for all systems obtained by deleting up to $\mu_{\mc N}$ nodes and $\mu_{\mc E}$ edges from \eqref{eq:full.dynamics.change.of.coordiantes}.
\end{lemma}
\begin{proof}
For any ac subgrid $i \in \N_{[1,N_\ac]}$, the Laplacian matrix $L^i_\ac$ of the graph $\mc G^i_\ac$ can be partitioned according to the node partition in Definition~\ref{def:partition} to obtain
\begin{align*} \label{eq:general.laplacian.partition]}
L^i_\ac = \begin{bmatrix}
X_{{\mc D}^i, {\mc D}^i}\!\!\!\! & 	X_{{\mc D}^i, {\mc C}^i}\!\!\!\! & 	X_{{\mc D}^i, \mc F^i} \\ 
X_{{\mc C}^i, {\mc D}^i}\!\!\!\! & 	X_{{\mc C}^i, {\mc C}^i}\!\!\!\! & 	X_{{\mc C}^i, \mc F^i} \\
X_{\mc F, {\mc D}}\!\!\!\! & 	X_{\mc F^i, {\mc C}^i}\!\!\!\! & 	X_{\mc F^i, \mc F^i} 
\end{bmatrix}\!,\; X^i_0 \coloneqq \begin{bmatrix}
X_{{\mc D}^i,{\mc C}^i}\!\!\!\! & X_{{\mc D}^i,\mc F^i}\\
X_{\mc F^i,{\mc C}^i}\!\!\!\! & X_{\mc F^i,\mc F^i}
\end{bmatrix}\!.
\end{align*}
At each iteration $k$, Algorithm~\ref{alg:removal} removes a node $\xi_k \in \bar{{\mc C}}^i_k$ if it is connected to $\mu_{\max}+1$ single-edge nodes $l_k \in {\mc D}^i$. Thus, at $k=0$, the only non-zero element in the first $\mu_{\max}+1$ rows is the element $(l_0,\xi_\rho)$, where $\rho\in \N_{[1,\mu_{\max}+1]}$. Using elementary row and column operations, we obtain
\begin{align}\label{eq:matrixiteration}
X_j^i=\begin{bmatrix}
{\diag\{x_{l_k,\xi_k}\}_{k=0}^{j-1}} & \mathbbl{0}_{j\times |{\mc C}^i|-j} & \mathbbl{0}_{j\times |\mc F^i|}  \\
\mathbbl{0}_{(|{\mc D}^i|-j)\times  j} &  X_{{\mc D}_j^i, {\mc C}_j^i}  & X_{{\mc D}_j^i,\mc F^i} \\
\mathbbl{0}_{|\mc F^i|\times  j} &X_{\mc F_j^i,{\mc C}_j^i}  & X_{\mc F^i,\mc F^i}\\
\multicolumn{3}{c}{\mathbbl{0}_{j\mu_{\max}\times (j+|\mc C_j^i \cup \mc F^i|)}}
\end{bmatrix},
\end{align}

with $j=1$. Moreover, the matrix $X_{{\mc D}^i_j, {\mc C}^i_j}$ is obtained by removing the $l^{\text{th}}_{j-1}$ row and $\xi^{\text{th}}_{j-1}$ column from $X_{{\mc D}^i_{j-1},{\mc C}^i_{j-1}}$, the $l^{\text{th}}_{j-1}$ row from $X_{{\mc D}^i_{j-1},\mc F^i}$, and the $\xi^{\text{th}}_{j-1}$ column from $X_{\mc F^i_{j-1},{\mc C}^i_{j-1}}$. Moreover, if at $k=1$, Algorithm~\ref{alg:removal} removes a node $\xi_1 \in \bar{{\mc C}}^i_1$, then the only non-zero element in the $l^\text{th}_1$ row of \eqref{eq:matrixiteration} is the element $(l_1,\xi_1)$ and we can again apply elementary row and column operations to obtain \eqref{eq:matrixiteration} with $j=2$. Induction over $j$ until $j=K=|{\mc C}^i|$ results in
	\begin{align*}
X_K^i \coloneqq \begin{bmatrix} \diag\{x_{l_k,\xi_k}\}_{k=0}^{K-1} & \mathbbl{0}_{K\times|\mc F^i|} \\
\mathbbl{0}_{(|{\mc D}^i|-K)\times  K} & X_{{\mc D}_K^i, \mc F^i } \\
\mathbbl{0}_{|\mc F^i|\times K}   & X_{\mc F^i,\mc F^i}\\
\multicolumn{2}{c}{\mathbbl{0}_{j\mu_{\max}\times (K+|\mc F^i|)}}
 \end{bmatrix}.
\end{align*}
Finally, the matrix $X_{\mc F^i,\mc F^i}$ is obtained by removing $|{\mc D}^i|+|{\mc C}^i|$ rows and columns from the Laplacian matrix $L^i_\ac$ of the graph $\mc G^i_\ac$. Because the graph $\mc G^i_\ac$ is connected, deleting rows and columns of $L^i_\ac$ results in a loopy-Laplacian with at least one diagonal element that is larger than the sum of the absolute values of the other elements in its row. Thus, $X_{\mc F^i,\mc F^i}$ is a loopy Laplacian with full rank (cf. \cite{DSB2018}) and it follows that $\rank \{X^i\}=\rank \{X^i_K \} =|{\mc C}^i|+|\mc F^i|$.
\end{proof}

\textit{Proof of Proposition~\ref{prop:max.inv.set}:} To characterize the largest invariant set contained in $\mc S$, we first separate the time derivative of $V$ (see  \eqref{eq:LaSalles.fcn.derivative}) into terms corresponding to the dc and ac subgrids:
	\begin{align*} \label{eq:lasalle.derivative.rewritten.out}
		\begin{split} 
			&\ddt{V} \!\!=\!-\!\sum_{i=1}^{N_\ac} \! \left(\! (\mc I_{\ad}^i B_\ac^i \mc W_\ac^i \eta^i)^\T \!M_p^i \mc  I^i_{\ad} B_\ac^i \mc W_\ac^i  \eta^i  \!+ \! {\omega^i}^\T \!D^i \omega^i
			\right) \! \\
			&- \! \sum_{j=1}^{N_\dc}  {v^j} ^\T \!  k_\theta^j  (G^j\!+\!L_\dc^j) v^j \! \!  
			- \! P^\T \!(\mc I_{\g,\ac}^\T \mc I_{\g,\ac} \!+\!\mc I_{\g,\dc}^\T \tilde{K}_\theta \mc I_{\g,\dc})K_\g^{-1} \!P.
		\end{split}
	\end{align*}
    Here, $B_\ac^i$ denotes the incidence matrix of the ac subgrid with index $i\in \N_{[1, N_\ac]}$. Moreover, $L_\dc^i$ and $v^i$ correspond to the Laplacian matrix and dc voltages of the dc subgrid with index $i \in \N_{[1,N_\dc]}$. Next, we distinguish the frequencies of the machines with losses $\omega_{\ac^\el}^i \in \R^{|\mc N_{\ac^\el}^i|}$, generation $\omega_{\ac^\g}^i \in \R^{|\mc N_{\ac^\g}^i|}$ and the remaining frequencies $\omega_{\ac^\oo}^i \in \R^{|\mc N_{\ac^\oo}^i|}$. We use $v_{\ad}^i \in \R^{|\mc N_{\ac}^i|}$, $v_{\da}^j \in \R^{|\mc N_{\da}^j|}$, and $v_{\dc}^j \in \R^{|\mc N_{\dc}^j|}$  to denote the dc voltages of the converters in the $i^\text{th}$ ac subgrid, the converter dc voltages in the $j^\text{th}$ dc subgrid, and the dc node voltages in the $j^\text{th}$ dc subgrid. Like the machine frequencies, we separate these vectors and, e.g., use $v_{\ad^\g}^i\in \R^{|\mc N_{\ad^\g}^i|}$ to denote the dc voltages of the converters with generation.
	Moreover, $\ddt V (x) =0$ holds for $x \in \mc S \coloneqq \big(\cup_{i=1}^{N_\ac} \mc A^i\big) \cup \big(\cup_{j=1}^{N_\dc} \mc V^j\big) \cup \mc P$, with
	\begin{align*}
		\mc A^i \!&\coloneqq \left \{x \in \R^n \left | \begin{array}{c} \mc  I_{\ad}^i B_\ac^i \mc W_\ac^i \eta ^i=\mathbbl{0}_{|\mc N_\ad^i |}, \\ \omega_{\ac^\el} ^i =\mathbbl{0}_{|\mc N_{\ac^\el}^i|} \end{array} \right . \right \}\!,  \\ 
		\mc V^j \!&\coloneqq \left \{ x \in \R^n \left | \begin{array}{c} \ L_\dc^j v^j=\mathbbl{0}_{|\mc N_\da^j \cup \mc N_\dc^j|}, \\  v_{\ad^\el}^j =\mathbbl{0}_{|\mc N_{\da^\el}^j|}, \ v_{\dc^\el}^j =\mathbbl{0}_{|\mc N_{\dc^\el}^j|} \end{array}\right . \right \}\!, \\*
		\mc P\!&= \left \{  x \in \R^n \left | \ P=\mathbbl{0}_{|\mc N_\g|} \right . \right \}\!.
	\end{align*}
	Next, we characterize the largest invariant set $\mc M \subseteq \mc S$ to show that $\mc M = \{\mathbbl{0}_n\}$. To this end, we note that $\mc S$ can be rewritten as $\mc S = \{ x \in \R^n \vert S x = \mathbbl{0}_n \}$ and use the fact that $\ddtn{k} S x(t) = \mathbbl{0}_n$ needs to hold for all $x(t) \in \mc M$ and all $k \in \N_0$. 
	Using $K_\theta^i=\diag\{k_{\theta,l}\}_{l=1}^{|\mc N_\ad^i|}$ and $x \in \mc S$ it can be verified that, for all $i \in \N_{[1,N_\ac]}$, invariance of $\mc S$ requires  
	\begin{align}
		\mc I_\ad^i B_\ac^i \mc W_\ac^i \ddt{} \eta^i = \mc I_\ad^i L_\ac^i \begin{bmatrix} \omega^i \\ K_\theta^i v_\ad^i \end{bmatrix}=\mathbbl{0}_{|\mc N_\ad^i|}. \label{eq:first.der.theta}  
	\end{align} 
	Using \eqref{eq:first.der.theta} and $x \in \mc S$ invariance of $\mc S$ requires
	\begin{align}
		\ddt P &= - T_p^{-1} K \begin{bmatrix} \omega^\mathsf{T}_{\ac^\g} & v^\mathsf{T}_{\ad^\g} & v^\mathsf{T}_{\dc^\g} \end{bmatrix}^\mathsf{T}=\mathbbl{0}_{|\mc N_\g|}, \label{eq:first.der.P} 
	\end{align}
	Using the same procedure for all $i \in \N_{[1,N_\ac]}$, it can be verified that invariance of $\mc S$ requires
	\begin{subequations}\label{eq:first.all.eq}
		\begin{align}
			\ddtn{2} \omega^i_{\ac^\el} =  {M_{\ac^\el}^i}^{-1}  \mc I_{\ac^\el}^i \mc I_\ac^i L_\ac^i  \begin{bmatrix} \omega^i \\ K_\theta^i v_\ac^i \end{bmatrix} \!=&\mathbbl{0}_{\mc N_{\ac^\el}^i} \label{eq:second.der.omega.el}, \\
				\!\!\! \mc I_\ad^i B_\ac^i \mc W_\ac^i \ddtn{3} \eta^i \!=\! X_0^i \begin{bmatrix} \omega^i \\ K_\theta^i v_\ad^i \end{bmatrix} =&\!\mathbbl{0}_{|\mc N_\ad^i|} \label{eq:third.der.theta},\\
			\ddtn{3} P \!=\! - T_p^{-1} K \ddtn{2}  \begin{bmatrix} \omega^\mathsf{T}_{\ac^\g} & v^\mathsf{T}_{\ad^\g} & v^\mathsf{T}_{\dc^\g} \end{bmatrix}^\mathsf{T}=&\mathbbl{0}_{|\mc N_\g|} \label{eq:third.der.P}. 
		\end{align}
	\end{subequations}
	Using \eqref{eq:third.der.P}, for all $i \in \N_{[1,N_\ac]}$, invariance of $\mc S$ requires
	\begin{align}
		\ddtn{2} \omega^i_{\ac^\g} &=  {M_{\ac^\g}^i}^{-1}  \mc I_{\ac^\g}^i \mc I_\ac^i L_\ac^i  \begin{bmatrix} \omega^i \\ K_\theta^i v_\ac^i \end{bmatrix} = \mathbbl{0}_{|\mc N_{\ac^\g}^i|}.\label{eq:second.der.omega.g} 
	\end{align}
	If $X_0^i$ has full column rank, it follows that
	\begin{align} \label{eq:equivalent.null.space}
		\Null (X_0^i  {M^i}^{-1} \mc I_\ac^i L_\ac^i)=\Null ( \mc I_\ac^i L_\ac^i),
	\end{align}
	and combining \eqref{eq:equivalent.null.space}, \eqref{eq:third.der.theta}, and  \eqref{eq:first.der.theta}, we have  $( \omega^i,K_\theta^i v_\ad^i ) \in \Null (\mc I_\ad^i L_\ac^i) \cap \Null ( \mc I_\ac^i L_\ac^i)=\Null(L_\ac^i)$. Because $L_\ac^i$ is the Laplacian of a connected graph, it follows that all entries of $( \omega^i,K_\theta^i v_\ad^i )$ are identical. On the other hand, combining \eqref{eq:first.der.P}, \eqref{eq:first.all.eq}, \eqref{eq:second.der.omega.g},  and  $(\eta^i,\omega^i,v^i,P) \in \mc A^i\cup \mc V^i \cup \mc  P$, we have 
	\begin{align*}
		\begin{bmatrix}  \mc I_{\ac^\el}^i \mc I_\ac^i  \\ \mc I_{\ac^\g}^i \mc I_\ac^i \\ \mc I _\ad^i \end{bmatrix} \! L_\ac^i  \begin{bmatrix} \mathbbl{0}_{|\mc N_{\ac^\el}^i \cup \mc N_{\ac^\g}^i|} \\ \omega_{\ac^\oo}^i \\ \mathbbl{0}_{|\mc N_{\ad^\el}^i \cup \mc N_{\ad^\g}^{(i)}|} \\ K_\theta^i v_{\ad^\oo}^i \end{bmatrix} = \mathbbl{0}_{|\mc N_{\ac^\el}^i \cup \mc N_{\ac^\g}^i \cup \mc N_\ad^i|},  
	\end{align*}
	which can be rewritten as
	\begin{align*}
	X_0^i \begin{bmatrix} \omega_{\ac^\oo}^i \\ K_\theta^i v_{\ad^\oo}^i\end{bmatrix}=\mathbbl{0}_{|\mc N_{\ac^\el}^i \cup \mc N_{\ac^\g}^i \cup \mc N_\ad^i|}.
	\end{align*}
    It follows that $(\omega_{\ac^\oo}^i, k_\theta v_{\ad^\oo}^i) = \mathbbl{0}_{|\mc N_{\ac^\oo}^i \cup \mc N_{\ad^\oo}^i|}$ (i.e., all entries are identical) if $X_0^i$ has full column rank. Under the conditions of the Theorem, Lemma \ref{lemma:full.column.rank} ensures that either $\mc I_\ad^i L_\ac^i \mc I_\ac^i$ or $X_0^i$ has full column rank and, for all $i \in \N_{[1,N_\ac]}$, all entries of $( \omega^i,K_\theta^i v_\ad^i )$ are identical when $x \in \mc M$.
	
	Next, we assume that $N_\dc > 0$ and $N_\ac > 0$ and pick any $j \in \N_{[1,N_\dc]}$. Next, $x \in \mc V^j$ implies that $v^j=v^\star \mathbbl{1}_{|\mc N_{\dc}^j|}$ where $v^\star \in \R$. Using condition~\ref{cond:identical.k.theta.gains}, this results in $\omega^i = k^j_\theta v^\star \mathbbl{1}_{|\mc N_\ac^i|} $ for each $\mc N_\ad^i \cap \mc N_\da^j \neq \emptyset$. Moreover, if $N_\dc > 1$ there exists $l \in \N_{[1,N_\dc]}$ such that $\mc N_\ad^i \cap \mc N_\da^l \neq \emptyset$ and  
	$v^l=v^\star k^j_\theta / k^l_\theta \mathbbl{1}_{|\mc N_\da^l \cup \mc N_\dc^l|}$. Because the graph $\mc G$ is connected, it follows by induction over the dc and ac subgrids that all the frequency and dc voltage deviations are proportional to $v^\star$. By Assumption~\ref{assump:onestab}, at least one dc voltage deviation or frequency deviation will be zero if $x \in \mc M$, and therefore $v^\star=0$ and $\mc M = \{\mathbbl{0}_n\}$. 
	
	If $N_\dc =0$, it follows that $N_\ac = 1$ and $\mc I^i_\ad$ is an empty matrix. Thus, we require $X_0^i$ to have full column rank, and using Assumption~\ref{assump:onestab}.\ref{assump:onestab.ac} it directly follows that $\mc M = \{\mathbbl{0}_n\}$. Moreover, if $N_\ac =0$, it follows that $N_\dc = 1$ and it trivially follows from Assumption~\ref{assump:onestab.dc} and $x\in \mc V^i$ that  $\mc M = \{\mathbbl{0}_n\}$.\hfill \QED
	
Note that if $D\succ 0$ and $G\succ 0$ the proof simplifies and only $\ddt \omega=\mathbbl{0}_{|\mc N_\ac|}$ and $\ddt v=\mathbbl{0}_{|\mc N_c \cup \mc N_\dc|}$ are needed to characterize the invariant set $\mc S$. 

\bibliographystyle{IEEEtran}
\bibliography{IEEEabrv,bib_file}

\begin{thebibliography}{10}
\providecommand{\url}[1]{#1}
\csname url@samestyle\endcsname
\providecommand{\newblock}{\relax}
\providecommand{\bibinfo}[2]{#2}
\providecommand{\BIBentrySTDinterwordspacing}{\spaceskip=0pt\relax}
\providecommand{\BIBentryALTinterwordstretchfactor}{4}
\providecommand{\BIBentryALTinterwordspacing}{\spaceskip=\fontdimen2\font plus
\BIBentryALTinterwordstretchfactor\fontdimen3\font minus
  \fontdimen4\font\relax}
\providecommand{\BIBforeignlanguage}[2]{{%
\expandafter\ifx\csname l@#1\endcsname\relax
\typeout{** WARNING: IEEEtran.bst: No hyphenation pattern has been}%
\typeout{** loaded for the language `#1'. Using the pattern for}%
\typeout{** the default language instead.}%
\else
\language=\csname l@#1\endcsname
\fi
#2}}
\providecommand{\BIBdecl}{\relax}
\BIBdecl

\bibitem{WEB+15}
W.~Winter, K.~Elkington, G.~Bareux, and J.~Kostevc, ``Pushing the limits:
  Europe's new grid: Innovative tools to combat transmission bottlenecks and
  reduced inertia,'' \emph{{IEEE} Power Energy Mag.}, vol.~13, no.~1, pp.
  60--74, 2015.

\bibitem{MDH+18}
F.~Milano, F.~Dörfler, G.~Hug, D.~J. Hill, and G.~Verbič, ``Foundations and
  challenges of low-inertia systems (invited paper),'' in \emph{Power Systems
  Computation Conference}, 2018.

\bibitem{CDA93}
M.~Chandorkar, D.~Divan, and R.~Adapa, ``Control of parallel connected
  inverters in standalone ac supply systems,'' \emph{{IEEE} Trans. Ind. Appl.},
  vol.~29, no.~1, pp. 136--143, 1993.

\bibitem{BIY14}
H.~Bevrani, T.~Ise, and Y.~Miura, ``Virtual synchronous generators: A survey
  and new perspectives,'' \emph{International Journal of Electrical Power and
  Energy Systems}, vol.~54, pp. 244--254, 2014.

\bibitem{DSF2015}
S.~D’Arco, J.~A. Suul, and O.~B. Fosso, ``A virtual synchronous machine
  implementation for distributed control of power converters in smartgrids,''
  \emph{Electr. Pow. Sys. Res.}, vol. 122, pp. 180--197, 2015.

\bibitem{JDH+14}
B.~Johnson, S.~Dhople, A.~Hamadeh, and P.~Krein, ``Synchronization of parallel
  single-phase inverters with virtual oscillator control,'' \emph{{IEEE} Trans.
  Power Electron.}, vol.~29, no.~11, pp. 6124--6138, 2014.

\bibitem{GCB+19}
D.~Gro\ss{}, M.~Colombino, B.~Jean-S\'ebastien, and F.~D\"orfler, ``The effect
  of transmission-line dynamics on grid-forming dispatchable virtual oscillator
  control,'' \emph{{IEEE} Trans. Control Netw. Syst.}, vol.~6, no.~3, pp.
  1148--1160, 2019.

\bibitem{SG+20}
I.~Suboti\`c, D.~Gro\ss, M.~Colombino, and F.~Dörfler, ``A {L}yapunov
  framework for nested dynamical systems on multiple time scales with
  application to converter-based power systems,'' \emph{{IEEE} Trans. Autom.
  Control}, 2020.

\bibitem{LCP20}
R.~H. Lasseter, Z.~Chen, and D.~Pattabiraman, ``Grid-forming inverters: A
  critical asset for the power grid,'' \emph{{IEEE} Trans. Emerg. Sel. Topics
  Power Electron.}, vol.~8, no.~2, pp. 925--935, 2020.

\bibitem{HS+17}
A.~F. {Hoke}, M.~{Shirazi}, S.~{Chakraborty}, E.~{Muljadi}, and
  D.~{Maksimovi\'c}, ``Rapid active power control of photovoltaic systems for
  grid frequency support,'' \emph{{IEEE} Trans. Emerg. Sel. Topics Power
  Electron.}, vol.~5, no.~3, pp. 1154--1163, 2017.

\bibitem{GO+21}
O.~Gomis-Bellmunt, E.~Sánchez-Sánchez, J.~Arévalo-Soler, and
  E.~Prieto-Araujo, ``Principles of operation of grids of {DC} and {AC}
  subgrids interconnected by power converters,'' \emph{{IEEE} Trans. Power
  Del.}, vol.~36, no.~2, pp. 1107--1117, 2021.

\bibitem{MSV+19}
U.~Markovic, O.~Stanojev, P.~Aristidou, E.~Vrettos, D.~Callaway, and G.~Hug,
  ``Understanding small-signal stability of low-inertia systems,'' \emph{{IEEE}
  Trans. Power Syst.}, vol.~36, no.~5, pp. 3997--4017, 2021.

\bibitem{NERC17}
``1200 {MW} fault induced solar photovoltaic resource interruption disturbance
  report,'' NERC, Tech. Rep., 2017.

\bibitem{TGA+20}
A.~Tayyebi, D.~Gro\ss, A.~Anta, F.~Kupzog, and F.~D\"orfler, ``Frequency
  stability of synchronous machines and grid-forming power converters,''
  \emph{{IEEE} Trans. Emerg. Sel. Topics Power Electron.}, vol.~8, no.~2, pp.
  1004--1018, 2020.

\bibitem{BHO2020}
S.~Baros, C.~N. Hadjicostis, and F.~O’Sullivan, ``Stability analysis of
  droop-controlled inverter-based power grids via timescale separation,'' in
  \emph{IEEE Conference on Decision and Control}, 2020, pp. 2098--2104.

\bibitem{CGD17}
S.~Curi, D.~Gro\ss{}, and F.~D{\"o}rfler, ``Control of low-inertia power grids:
  A model reduction approach,'' in \emph{IEEE Conf. on Dec. and Contr.}, 2017,
  pp. 5708--5713.

\bibitem{MDPS+17}
P.~Monshizadeh, C.~De~Persis, T.~Stegink, N.~Monshizadeh, and A.~van~der
  Schaft, ``Stability and frequency regulation of inverters with capacitive
  inertia,'' in \emph{IEEE Conf. on Dec. and Contr.}, 2017, pp. 5696--5701.

\bibitem{TAD2020}
A.~Tayyebi, A.~Anta, and F.~D\"orfler, ``Hybrid angle control and almost global
  stability of grid-forming power converters,'' 2020, arXiv:2008.07661.

\bibitem{PPG2007}
N.~Pogaku, M.~Prodanovic, and T.~C. Green, ``Modeling, analysis and testing of
  autonomous operation of an inverter-based microgrid,'' \emph{IEEE
  Transactions on Power Electronics}, vol.~22, no.~2, pp. 613--625, 2007.

\bibitem{LD14}
L.~{Luo} and S.~V. {Dhople}, ``Spatiotemporal model reduction of inverter-based
  islanded microgrids,'' \emph{{IEEE} Trans. Energy Convers.}, vol.~29, no.~4,
  pp. 823--832, 2014.

\bibitem{RMK2015}
M.~Rasheduzzaman, J.~A. Mueller, and J.~W. Kimball, ``Reduced-order
  small-signal model of microgrid systems,'' \emph{IEEE Transactions on
  Sustainable Energy}, vol.~6, no.~4, pp. 1292--1305, 2015.

\bibitem{VHA+2018}
P.~Vorobev, P.-H. Huang, M.~Al~Hosani, J.~L. Kirtley, and K.~Turitsyn,
  ``High-fidelity model order reduction for microgrids stability assessment,''
  \emph{{IEEE} Trans. Power Syst.}, vol.~33, no.~1, pp. 874--887, 2018.

\bibitem{PPG07}
N.~{Pogaku}, M.~{Prodanovic}, and T.~C. {Green}, ``Modeling, analysis and
  testing of autonomous operation of an inverter-based microgrid,''
  \emph{{IEEE} Trans. Power Electron.}, vol.~22, no.~2, pp. 613--625, 2007.

\bibitem{CBB+2015}
I.~{Cvetkovic}, D.~{Boroyevich}, R.~{Burgos}, C.~{Li}, and P.~{Mattavelli},
  ``Modeling and control of grid-connected voltage-source converters emulating
  isotropic and anisotropic synchronous machines,'' in \emph{Workshop on Contr.
  and Modeling for Power Electr.}, 2015.

\bibitem{AJD18}
C.~Arghir, T.~Jouini, and F.~Dörfler, ``Grid-forming control for power
  converters based on matching of synchronous machines,'' \emph{Automatica},
  vol.~95, pp. 273--282, 2018.

\bibitem{PWS-MAP:98}
P.~W. Sauer and M.~A. Pai, \emph{Power System Dynamics and Stability}.\hskip
  1em plus 0.5em minus 0.4em\relax Prentice Hall, 1998.

\bibitem{ABL+2012}
J.~Aho, A.~Buckspan, J.~Laks, P.~Fleming, Y.~Jeong, F.~Dunne, M.~Churchfield,
  L.~Pao, and K.~Johnson, ``A tutorial of wind turbine control for supporting
  grid frequency through active power control,'' in \emph{American Control
  Conference}, 2012, pp. 3120--3131.

\bibitem{KB2013}
T.~Knudsen and T.~Bak, ``Simple model for describing and estimating wind
  turbine dynamic inflow,'' in \emph{American Control Conference}, 2013, pp.
  640--646.

\bibitem{PM2019}
R.~{Pates} and E.~{Mallada}, ``Robust scale-free synthesis for frequency
  control in power systems,'' \emph{{IEEE} Trans. Control Netw. Syst.}, vol.~6,
  no.~3, pp. 1174--1184, 2019.

\bibitem{L1960}
J.~{LaSalle}, ``Some extensions of {L}iapunov's second method,'' \emph{IRE
  Transactions on Circuit Theory}, vol.~7, no.~4, pp. 520--527, 1960.

\bibitem{V1980}
M.~{Vidyasagar}, ``Decomposition techniques for large-scale systems with
  nonadditive interactions: Stability and stabilizability,'' \emph{{IEEE}
  Trans. Autom. Control}, vol.~25, no.~4, pp. 773--779, 1980.

\bibitem{BDB+16}
J.~Beerten, G.~B. Diaz, S.~D'Arco, and J.~A. Suul, ``Comparison of small-signal
  dynamics in {MMC} and two-level {VSC} {HVDC} transmission schemes,'' in
  \emph{IEEE International Energy Conference}, 2016.

\bibitem{JTM2015}
J.~Sabatier, T.~Youssef, and M.~Pellet, ``{HVDC} line parameters estimation
  based on line transfer functions frequency analysis,'' in \emph{Int. Conf. on
  Informatics in Control, Automation and Robotics}, 2015, pp. 497--502.

\bibitem{DB2013}
F.~Dörfler and F.~Bullo, ``Kron reduction of graphs with applications to
  electrical networks,'' \emph{{IEEE} Trans. Circuits Syst. {I}}, vol.~60,
  no.~1, pp. 150--163, 2013.

\bibitem{GRR2009}
M.~G. Villalva, J.~R. Gazoli, and E.~R. Filho, ``Comprehensive approach to
  modeling and simulation of photovoltaic arrays,'' \emph{{IEEE} Trans. Power
  Electron.}, vol.~24, no.~5, pp. 1198--1208, 2009.

\bibitem{DSB2018}
F.~{Dörfler}, J.~W. {Simpson-Porco}, and F.~{Bullo}, ``Electrical networks and
  algebraic graph theory: Models, properties, and applications,''
  \emph{Proceedings of the IEEE}, vol. 106, no.~5, pp. 977--1005, 2018.

\end{thebibliography}

\end{document}